\newtheorem{theorem}{Theorem}[section]
\newtheorem{lemma}[theorem]{Lemma}
\newtheorem{corollary}[theorem]{Corollary}
\theoremstyle{definition}
\newtheorem{definition}[theorem]{Definition}
\theoremstyle{remark}
\newtheorem{remark}[theorem]{Remark}
\theoremstyle{remark}
\newtheorem*{claim*}{Claim}
\newcommand{\E}{\mathbb{E}}
\renewcommand{\P}{\mathbb{P}}
\newcommand{\alg}{\overline{\mathbb{Q}}}
\newcommand{\R}{\mathbb{R}}
\newcommand{\Z}{\mathbb{Z}}
\renewcommand{\C}{\mathbb{C}}
\renewcommand{\Re}{\operatorname{Re}}
\renewcommand{\Im}{\operatorname{Im}}
\newcommand{\bx}{\mathbf{x}}
\newcommand{\bw}{\mathbf{w}}
\newcommand{\bt}{\mathbf{t}}
\newcommand{\bs}{\mathbf{s}}
\newcommand{\by}{\mathbf{y}}
\newcommand{\bz}{\mathbf{z}}
\newcommand{\bu}{\mathbf{u}}
\newcommand{\be}{\boldsymbol{\eta}}
\newcommand{\dd}{{\rm d}}
\newcommand{\rv}{{\rm v}}
\newcommand{\ind}{\mathbbm{1}}
\newcommand{\h}{Q} % upper bound for height
\newcommand{\dist}{\mathop{\mathrm{dist}}\nolimits}
\newcommand{\Vol}{\mathop{\mathrm{Vol}}\nolimits}
\newcommand{\conj}[1]{\overline{#1}}
\newcommand{\eqdistr}{\stackrel{d}{=}}
\newcommand{\Lip}{\operatorname{Lip}}
\newcommand{\tc}{}
\newcommand{\tb}{}
\newcommand{\tv}{}
\title[Joint distribution of conjugate algebraic numbers]{Joint distribution of conjugate algebraic numbers: a random polynomial approach}
\keywords{Сonjugate algebraic numbers, correlations between algebraic numbers, distribution of algebraic numbers, integral polynomials, random polynomials, mixed correlation functions, real zeros, complex zeros, Coarea formula, Bombieri norm, $l_p$-balls, weighted $l_p$-norm}
\subjclass[2010]{Primary, 11N45; secondary, 11C08, 60G55, 30C15, 26C10}
\thanks{The work  was done with the financial support of the Bielefeld University (Germany) in terms of project SFB 1283. The work of the third author is supported  by the Program of the Presidium of the Russian
	Academy of Sciences ``Latest methods in mathematical modeling in the study of nonlinear dynamical systems'' 	(targeted subsidy 08-04).}
\author[F.~G\"otze]{Friedrich G\"otze}
 \address{Friedrich G\"otze, Faculty of Mathematics,
 Bielefeld University,
 P. O. Box 10 01 31,
 33501 Bielefeld, Germany}
 \email{goetze@math.uni-bielefeld.de}
\author[D.~Koleda]{Denis Koleda}
 \address{Denis Koleda, Institute of Mathematics,
 National Academy of Sciences of Belarus,
 220072 Minsk, Belarus}
 \email{koledad@rambler.ru}
\author[D.~Zaporozhets]{Dmitry Zaporozhets}
 \address{Dmitry Zaporozhets\\
 St.\ Petersburg Department of
 Steklov Institute of Mathematics,
 Fontanka~27,
  191011 St.\ Petersburg,
 Russia}
 \email{zap1979@gmail.com}
\begin{document}

\begin{abstract}

We count the algebraic numbers of fixed degree by their $\mathbf{w}$-weighted $l_p$-norm which generalizes the na{\"\i}ve height, the length, the Euclidean  and the Bombieri norms. For non-negative integers $k,l$ such that $k+2l\leq n$ and a Borel subset $B\subset \mathbb{R}\times\mathbb{C}_+^l$ denote by $\Phi_{p,\mathbf{w},k,l}(Q,B)$  the number of ordered $(k+l)$-tuples in $B$ of conjugate algebraic numbers of
degree~$n$ and $\mathbf{w}$-weighted $l_p$-norm at most $ Q$. We show that
$$
\lim_{ Q\to\infty}\frac{\Phi_{p,\mathbf{w},k,l}( Q,B)}{ Q^{n+1}}=\frac{{\mathrm{Vol}}_{n+1}(\mathbb{B}_{p,\mathbf{w}}^{n+1})}{2\zeta(n+1)}\int_B \rho_{p,\mathbf{w},k,l}(\mathbf{x},\mathbf{z}){\rm d}\mathbf{x}{\rm d}\mathbf{z},
$$
where ${\mathrm{Vol}}_{n+1}(\mathbb{B}_{p,\mathbf{w}}^{n+1})$ is the volume of the unit $\mathbf{w}$-weighted $l_p$-ball  and  $\rho_{p,\mathbf{w},k,l}$ \tc{will} denote the correlation function of $k$ real and $l$ complex zeros of the random polynomial $\sum_{j=1}^n \frac{\eta_j}{w_j} z^j$, where $\eta_j $  are i.i.d.  random variables  with density $c_p e^{-|t|^p}$ for $0<p<\infty$ and with constant density on $[-1,1]$ for $p=\infty$. If the boundary of $B$ is of Lipschitz type, we also estimate the rate of convergence.
We give an explicit formula for $\rho_{p,\mathbf{w},k,l}$\tc{,} which in the case $k+2l=n$ has a very simple form. To this end, we obtain a general formula for the correlations between real and complex zeros of a random polynomial with arbitrary independent absolutely continuous  coefficients.

% simplifies to
%$$
%\rho_{p,\bw,n-2l,l}=\frac{2}{(n+1)\Vol_{n+1}(\mathbb{B}_{p,\bw}^{n+1})}\,\frac{\sqrt{|\mathrm{D}[q]|}\phantom{1^n}}{(l_{p,\bw}[q])^{n+1}},
%$$
%where $q$ is the monic polynomial whose zeros are the arguments of the correlation function $\rho_{p,\bw,n-2l,l}$
%and $\mathrm{D}[q]$ denotes its discriminant.

%As an application, we show that with respect to the Bombieri 2-norm, i.e. for $p=2$ and $\bw=\mathbf{b}:=\big(\binom{n}{j}^{-1/2}\big)_{j=0}^n$, the asymptotic density of real algebraic numbers coincides with the normalized Cauchy
%density up to a constant factor depending on $n$ only:
%$$
%\rho_{2,\mathbf{b},1,0}(x)= c_n\cdot\frac{1}{\pi(1+x^2)}.
%$$
\end{abstract}

\maketitle

\section{Introduction}
What is the distribution of algebraic numbers of a given degree $n$? To answer this question, we first need to define a suitable notion for it. For discrete sets like e.g. infinite subsets of
integers
one is lead to consider asymptotic relative densities of such sets in intervals of integers $[1,N]$  in the large $N$ limit.
Clearly, this direct approach does not work for algebraic numbers (real or complex), since any domain contains an infinite number of them (even for a fixed degree $n$).
A possible solution is \tc{the following classical ordering} of  algebraic numbers by the concept of height.
Let $\alg$ denote the field of (all) algebraic numbers over $\mathbb{Q}$. A function $h:\alg\to\R_+$ is called a \emph{height function} if for any $n\in\Z_+$ and $\h>0$ there are only finitely many algebraic numbers
$\alpha$ of degree~$n$ such that $h(\alpha)\leq \h$. Note that usually it is required (and we will always assume this) that $h(\alpha')=h(\alpha)$ for all conjugates of~$\alpha$.

Having defined $h$, we are interested in the asymptotic number of $\alpha\in\alg$ of degree~$n$ lying in a given subset $B$ of $\R$ or $\C$ such that $h(\alpha)\leq\h$ as $\h\to\infty$. More generally, for $k=1,\dots,n$,
one would like to determine  the asymptotic behaviour of the number of $k$-tuples $(\alpha_1,\dots,\alpha_k)\in B_1\times\dots\times B_k$ of \emph{conjugate} algebraic numbers of degree $n$ and height at most $\h$ as
$\h\to\infty$.

In this paper, we consider \tc{the} so-called weighted $l_p$-heights ($p\in(0,\infty]$).

We would like to emphasize that throughout this paper the degree $n$ of polynomials, algebraic numbers, etc. is {\it fixed}.

Section \ref{sc-bas-def} contains some basic notation. In Section \ref{sc-alg-numb} we describe the problem of counting vectors with algebraic coordinates and formulate the main number-theoretical results of the paper.
Section \ref{sc-rnd-poly} gives a necessary account of related topics in random polynomials. There we formulate the principal result relating  distributions of zeros of random polynomials and algebraic numbers. In
Section \ref{sc-corr-func} we state several explicit formulae for a function which plays the role of the joint distribution functions for conjugate algebraic numbers. Sections \ref{sc-proof-alg-n}--\ref{sc-proof-add-ident} contain the proofs of our statements.

\section{Basic definitions}\label{sc-bas-def}

Given a polynomial $q(z):=a_0+a_1z+\dots+a_nz^n$ and a vector of positive weights $\bw=(w_0, w_1,\dots,w_n)$ define the $\bw$-weighted $l_{p,\bw}$-norm of $q$ as
$$
l_{p,\bw}[q]:=
\left\{
  \begin{array}{ll}
    \left(\sum_{i=0}^{n}|w_i a_i|^p\right)^{1/p}, & 0<p<\infty; \\
    \max_{0\le i\le n} w_i |a_i|, & p=\infty.
  \end{array}
\right.
$$
Although it is not a real norm for $0<p<1$, all our results remain true in this case as well.

In the non-weighted case $\bw=\mathbf{1}$, this notion generalizes the na\"{\i}ve height ($p=\infty$), the length ($p=1$), and the {Euclidean} norm ($p=2$):
$$
H[q] := \max_{0\le i\le n} |a_i|,\quad L[q] := \sum_{i=0}^n |a_i|, \quad\|q\|:=\left(\sum_{i=0}^{n}a_i^2\right)^{1/2}.
$$
An important weighted example is the Bombieri $p$-norm:
$$
[q]_p:=\left(\sum_{i=0}^{n}\binom{n}{i}^{1-p}|a_i|^p\right)^{1/p}.
$$

Denote by $\mathbb{B}_{p,\bw}^{n+1}$ the $(n+1)$-dimensional unit $l_{p,\bw}$-ball:
\begin{equation}\label{eq-unit-ball}
\mathbb{B}_{p,\bw}^{n+1}:=\left\{(a_0,\dots,a_n)\in\R^{n+1}\colon \sum_{i=0}^{n}|w_i a_i|^p\leq1 \right\}.
\end{equation}
Using the well-known formula for the volume of the (non-weighted) unit $l_p$-ball $\mathbb{B}_{p}^{n+1}$ we have
\begin{equation}\label{1453}
\Vol_{n+1}(\mathbb{B}_{p,\bw}^{n+1})=\frac{\Vol_{n+1}(\mathbb{B}_{p}^{n+1})}{w_0 w_1 \dots w_n}=
\left\{
\begin{array}{ll}
\frac{2^{n+1} \Gamma\left(1+\frac1p\right)^{n+1}}{w_0 w_1 \dots w_n \Gamma\left(1+\frac{n+1}p\right)}, & p<\infty, \\
\frac{2^{n+1}}{w_0 w_1 \dots w_n }, & p=\infty.
\end{array}
\right.
.
\end{equation}

Let $\mathcal{P}_{p,\bw}(\h)$ denote the class of integral polynomials (polynomials with integer coefficients) of degree $n$ and the $l_{p,\bw}$-height at most $\h$:
$$
\mathcal{P}_{p,\bw}(\h):=\{q\in\mathbb{Z}[z]\colon\deg[q]=n, \, l_{p,\bw}[q]\leq\h\}.
$$
We say that an integral polynomial  is \emph{prime}, if it is irreducible over $\mathbb{Q}$, primitive (the greatest common divisor of its coefficients equals 1), and its leading coefficient is positive. Denote by
$\mathcal{P}_{p,\bw}^{*}(\h)$ the class of prime polynomials from $\mathcal{P}_{p,\bw}(\h)$:
$$
\mathcal{P}^*_{p,\bw}(\h):=\{q\in\mathcal{P}_{p,\bw}(\h)\colon q \text{ is prime}\}.
$$

The \emph{minimal polynomial} of an algebraic number $\alpha$ is the (unique) prime polynomial $q$ such that $q(\alpha)=0$. We put by definition
$$
l_{p,\bw}[\alpha]:=l_{p,\bw}[q].
$$
The other roots of $q$ are called \tc{the} (algebraic) \emph{conjugates} of $\alpha$.

\section{Distribution of algebraic numbers}\label{sc-alg-numb}

We would like to study the joint distribution of several conjugate algebraic numbers of degree $n$ and bounded height. What is the natural configuration space for this problem? Consider a  prime polynomial of degree $n$.
Some of its zeros are real, and the rest are symmetric with respect to the real line. Thus we may neglect the zeros lying in $\C_-$. Fix some integer numbers $k,l\geq0$ such that $0<k+2l\leq n$.

For a measurable set $B\subset \R^k\times\C_+^l$ and the height function $l_{p,\bw}$ denote by $\Phi_{p,\bw,k,l}(\h,B)$ the \tc{number of  $(k+l)$-tuples} $(\alpha_1,\dots,\alpha_k,\beta_1,\dots,\beta_l)\in B$ of \emph{distinct}
numbers such that for some $q\in\mathcal{P}_{p,\bw}^{*}(\h)$ it holds
$$
q(\alpha_1)=\dots=q(\alpha_k)=q(\beta_1)=\dots=q(\beta_l)=0.
$$
Essentially $\Phi_{p,\bw,k,l}(\h,B)$ denotes the number of ordered $(k+l)$-tuples in $B$ of conjugate algebraic numbers of degree $n$ and height $l_{p,\bw}$ at most $Q$.

We always assume that $B$ is measurable and its boundary $\partial B$ has Lebesgue measure $0$. Our aim is to show that there exists a non-trivial limit
\begin{equation}\label{1848}
\lim_{\h\to\infty}\frac{\Phi_{p,\bw,k,l}(\h,B)}{\h^{n+1}}
\end{equation}
and to find its exact value.

\begin{theorem}\label{2235}
For $p\in(0,\infty]$, a fixed positive vector $\bw=(w_0,w_1,\dots,w_n)$, and some integer numbers $k,l\geq0$ such that $0<k+2l\leq n$, there exists a function $\rho_{p,\bw,k,l}:\R^k\times\C_+^l\to\R_+$ such that for any  measurable $B\subset
\R^k\times\C_+^l$ with the boundary having Lebesgue measure $0$ we have
$$
\lim_{\h\to\infty}\frac{\Phi_{p,\bw,k,l}(\h,B)}{\h^{n+1}}=\frac{\Vol_{n+1}(\mathbb{B}_{p,\bw}^{n+1})}{2\zeta(n+1)}\int_B \rho_{p,\bw,k,l}(\bx,\bz)\dd\bx\dd\bz,
$$
where the formula for $\Vol_{n+1}(\mathbb{B}_{p,\bw}^{n+1})$ is given in~\eqref{1453}. The expression for $\rho_{p,\bw,k,l}(\bx,\bz)$ will be given in Corollary~\ref{2111}.
\end{theorem}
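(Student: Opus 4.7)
The plan is to translate the arithmetic counting problem into a lattice-point problem in the coefficient space $\R^{n+1}$, approximate the lattice sum by a Riemann integral over the scaled ball $\h\mathbb{B}_{p,\bw}^{n+1}$, and finally identify the resulting root-space density with the correlation function of the random polynomial described in the abstract via the coarea formula.

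\emph{Step 1 (From prime polynomials to all integer coefficient vectors).} Each ordered tuple counted by $\Phi_{p,\bw,k,l}(\h,B)$ corresponds to a prime polynomial $q\in\mathcal{P}^*_{p,\bw}(\h)$ together with an ordered choice of its roots in $B$. Primality decomposes into three independent conditions: primitivity (asymptotic density $1/\zeta(n+1)$ in $\mathbb{Z}^{n+1}$), positivity of the leading coefficient (a factor $1/2$), and irreducibility over $\mathbb{Q}$ (a density-one condition by a van der Waerden type bound, contributing only $o(\h^{n+1})$ exceptions). Polynomials of degree strictly less than $n$ contribute $O(\h^n)$ to the lattice count. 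Writing $N_B(\mathbf{a})$ for the number of ordered tuples of distinct real and upper-half-plane roots in $B$ of the polynomial with coefficient vector $\mathbf{a}$, these reductions yield
$$
\Phi_{p,\bw,k,l}(\h,B)=\frac{1}{2\zeta(n+1)}\sum_{\mathbf{a}\in\mathbb{Z}^{n+1}\cap \h\mathbb{B}_{p,\bw}^{n+1}} N_B(\mathbf{a})+o(\h^{n+1}).
$$

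\emph{Step 2 (Lattice sum to a continuous integral).} Since $\h\mathbb{B}_{p,\bw}^{n+1}$ has Lipschitz boundary and $N_B$ is locally integrable, a standard lattice-point approximation gives
$$
\sum_{\mathbf{a}\in\mathbb{Z}^{n+1}\cap \h\mathbb{B}_{p,\bw}^{n+1}} N_B(\mathbf{a})=\int_{\h\mathbb{B}_{p,\bw}^{n+1}} N_B(\mathbf{a})\,\dd\mathbf{a}+o(\h^{n+1}).
$$
Crucially, positive rescaling of a coefficient vector leaves the roots unchanged, so $N_B(\h\mathbf{b})=N_B(\mathbf{b})$; substituting $\mathbf{a}=\h\mathbf{b}$ gives
$$
\int_{\h\mathbb{B}_{p,\bw}^{n+1}} N_B(\mathbf{a})\,\dd\mathbf{a}=\h^{n+1}\int_{\mathbb{B}_{p,\bw}^{n+1}} N_B(\mathbf{b})\,\dd\mathbf{b}=\h^{n+1}\,\Vol_{n+1}(\mathbb{B}_{p,\bw}^{n+1})\,\E[N_B(\bxi)],
$$
where $\bxi$ is uniformly distributed on $\mathbb{B}_{p,\bw}^{n+1}$. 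Combining Steps 1 and 2 already yields the prefactor $\Vol_{n+1}(\mathbb{B}_{p,\bw}^{n+1})/(2\zeta(n+1))$ predicted by the theorem.

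\emph{Step 3 (Identification of the density).} It remains to show that $\E[N_B(\bxi)]$ can be written as $\int_B \rho_{p,\bw,k,l}(\bx,\bz)\,\dd\bx\,\dd\bz$ for some function $\rho_{p,\bw,k,l}$. Scale-invariance of the roots lets one replace the uniform law on $\mathbb{B}_{p,\bw}^{n+1}$ by the law of $(\eta_0/w_0,\dots,\eta_n/w_n)$ with i.i.d. density $c_p e^{-|t|^p}$ (respectively uniform on $[-1,1]$ when $p=\infty$), which is precisely the random polynomial appearing in the abstract. Writing the expectation through the coarea formula applied to the map (leading coefficient, roots) $\mapsto$ (coefficient vector), whose Jacobian is a power of $|a_n|$ times the Vandermonde determinant, and integrating out the $n-k-2l$ unobserved roots yields the explicit correlation function $\rho_{p,\bw,k,l}$ and completes the proof.

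\emph{Main obstacle.} The hard part is uniform control of the error terms. Both the integrand $N_B(\mathbf{a})$ and the Riemann remainder in Step 2 blow up near the discriminant locus $\{\mathbf{a}:\mathrm{D}[q_\mathbf{a}]=0\}$ on which the coefficient-to-roots map degenerates. One must establish a quantitative anti-concentration bound for integer polynomials in a shrinking neighbourhood of this locus, and similarly sharpen the van der Waerden estimate so that reducible polynomials within the weighted $l_p$-ball contribute only $o(\h^{n+1})$. Once these quantitative estimates are in place, the Kac-Rice identification in Step 3 is routine.
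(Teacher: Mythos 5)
Your proposal follows essentially the same route as the paper: reduce to counting coefficient vectors in $\h\mathbb{B}_{p,\bw}^{n+1}$ weighted by the number of root-tuples in $B$ (with primitivity handled by M\"obius inversion giving $1/\zeta(n+1)$, the sign by $1/2$, and reducible polynomials discarded via the $O(\h^n\log\h)$ bound), pass to the volume integral, and transfer the uniform law on the weighted $l_p$-ball to the i.i.d. model $\sum\eta_k z^k/w_k$ via the radial representation of Barthe et al., after which the integral over $B$ of the correlation function appears by definition. The one justification to tighten is in Step 2: local integrability of $N_B$ is \emph{not} sufficient for the lattice-sum-to-integral approximation (the paper's own counterexample is the set of rational points in the unit cube, which has Lebesgue measure zero yet contains $\asymp \h^{n+1}$ lattice points after scaling); what is actually needed, and what the standing hypothesis that $\partial B$ has Lebesgue measure zero supplies, is Jordan measurability of each level set $\{\mathbf{a}\in\mathbb{B}_{p,\bw}^{n+1}: N_B(\mathbf{a})=m\}$, whose boundary lies in the union of the discriminant locus, the hyperplane $a_n=0$, and the preimage of $\partial B$ under the root maps. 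Your ``main obstacle'' about anti-concentration near the discriminant locus is relevant only for the rate of convergence (Theorem~\ref{1224}), not for the limit itself.
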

{Provided that $\partial B$ is smooth enough we are able to estimate the rate of convergence in~\eqref{1848}}  as well. To clarify what we mean by ``smooth enough'' we need the following definition (see~\cite[Definition 2.2]{mW12}). 
\begin{definition}\label{2104}
We say that  $S\subset\R^d$ is  of Lipschitz class $(M,L)$ ($S\in\operatorname{Lip}(M,L))$ if  there exist $M$ maps
$\phi_1, \dots, \phi_M : [0, 1]^{d-1} \to\R^d$ satisfying a Lipschitz condition
\[
|\phi_i(x) - \phi_i(y)| \le L|x - y| \text{ for } x, y \in [0, 1]^{d-1}, \quad  i = 1, \dots,M,
\]
such that $S$ is covered by the images of the maps $\phi_i$. 
\end{definition}
 \tc{ \begin{remark}\label{2006}
     It is clear that if in the Definition \tb{\ref{2104}}  we replace the domain $[0,1]^{d-1}$  by  an arbitrary parallelepiped, then $S$ is still of Lipschitz class with some different parameter $L$.
 \end{remark}
	}

This definition \tb{can be naturally extended to}  subsets of $\R^{d_1}\times\C^{d_2}$ \tb{as well by considering} them as images under the standard isometry between $\R^{d_1}\times\C^{d_2}$ and $\R^{d_1+2d_2}.$

\begin{theorem}\label{1224}
Suppose that the boundary of $B$ is of Lipschitz class $(M,L)$. Then,
$$
\left|\frac{\Phi_{p,\bw,k,l}(\h,B)}{\h^{n+1}}-\frac{\Vol_{n+1}(\mathbb{B}_{p,\bw}^{n+1})}{2\zeta(n+1)}\int_B \rho_{p,\bw,k,l}(\bx,\bz)\dd\bx\dd\bz\right|\leq \tc{ \begin{cases}
C\frac {\log\h}\h, & n=2, \ l=0, \\
C\frac 1\h, & \text{otherwise},
\end{cases}}
$$
where $C$ depends on $n, M, L, \bw$ only.

\end{theorem}

For the na\"{\i}ve height ($p=\infty$, $w_i = 1$), the problem of finding the limit in~\eqref{1848} was solved for real ($k=1,l=0$) and complex ($k=0,l=1$) algebraic numbers, see~\cite{dK14} and~\cite{GKZ15}, and also for tuples of real conjugate algebraic numbers ($k$ is arbitrary, $l=0$), see \cite{GKZ15c}.

Unfortunately, our approach cannot cover the case of the Mahler measure. The Mahler measure (in particular, in the form of the Weil height) has many applications in algebraic number theory. Counting algebraic numbers
and points with respect to the Weil height and its generalisations has been intensively studied.
See the papers \cite{MV08}, \cite{MV07}, \cite{mW16}, \cite{GG16} for results in this direction and related references.

It turns out that the function $\rho_{p,\bw,k,l}$ coincides with the correlation function of the zeros of some specific random polynomial. To formulate the result we first recall some essential notions.

\section{Zeros of random polynomials}\label{sc-rnd-poly}
 Let $\xi_0,\xi_1,\dots,\xi_{n}$ be independent \emph{real-valued} random variables with bounded probability density functions $f_0,\dots,f_n$. Consider the random polynomial defined {as}
\begin{equation}\label{1914}
G(z):=\xi_0+\xi_1z+\dots+\xi_nz^n,\quad z\in\C.
\end{equation}
With probability one, all zeros of $G$ are simple, see~\cite{eB62}. Denote by $\mu$ the empirical measure counting {the} zeros of $G$:
\[
\mu:=\sum_{z:G(z)=0}\delta_z,
\]
where $\delta_z$ is the unit point mass at $z$. The random measure $\mu$ may be regarded as a  random point process on $\mathbb{C}$. {A} natural way of describing the distribution of a point process is via its \emph{correlation functions}.
Since the coefficients of $G$ are real, its  zeros are symmetric with respect to the real line, and some of them possibly are real. Therefore, the natural configuration space for the point process $\mu$
must be a ``separated'' union $\C_+\cup\R$ with topology induced by the union of topologies in $\C_+$ and $\R$. Instead of considering the correlation functions of the process on $\C_+\cup\R$, {an equivalent way is to
investigate the} \emph{mixed $(k,l)$-correlation functions} (see~\cite{TV15}).
We call functions   $\rho_{k,l}\,:\,\R^k\times\mathbb{C}_+^l \to\R_+$, where $0<k+2l\le n$, {mixed $(k,l)$-correlation functions of the zeros of $G$}, if for any family of mutually disjoint Borel subsets
$B_1,\dots,B_k\subset\R$ and $B_{k+1},\dots,B_{k+l}\subset\C_+$,
\begin{equation}\label{eq-rhokl}
\E\,\left[\prod_{i=1}^{k+l}\mu(B_i)\right]=\int_{B_1}\dots\int_{B_{k+l}}\,\rho_{k,l}(\bx,\bz)\,\dd\bx\,\dd\bz,
\end{equation}
{where $\E$ denotes the expectation with respect to the joint distribution of $\xi_0,\xi_1,\dots,\xi_{n}$.}
Here and subsequently, we write
$$
\bx:=(x_1,\dots,x_k)\in\R^k,\quad \bz:=(z_1,\dots,z_l)\in\C_+^l.
$$

The most intensively studied class of random polynomials are \emph{Kac polynomials}, when $\xi_i$'s are i.i.d.  Sometimes the i.i.d. coefficients are considered with some non-random weights $c_i$'s. The common
examples are \emph{flat or Weil polynomials} ({with} $c_i=\sqrt{1/i!}$) and \emph{elliptic polynomials} ({with} $c_i=\sqrt{\binom{n}{i}}$).

The $(1,0)$-correlation function $\rho_{1,0}$ is called { a density of real zeros.  Integrated over $\R$} it gives the average number of real zeros of $G$. The asymptotic properties of this quantity as $n\to\infty$ have
been intensively  studied for many years,  mostly for Kac polynomials; see the historical background  in~\cite{BS86} and the survey of the most recent results in~\cite{TV15}. We just mention some {contributions here
like}:~\cite{mK43}, \cite{EO56}, \cite{dS69}, \cite{IM71-1}, \cite{NZ09}, \cite{LPX15}, \cite{kS16b}, \cite{tB16}, \cite{FK18}.

{Similarly, $\rho_{0,1}$ is called a density of complex zeros being an expectation of the empirical measure $\mu$ counting non-real zeros.} {Its limit behaviour as $n\to\infty$ is of a great interest as well},
see~\cite{SS62}, \cite{IZ97}, \cite{IZ13}, \cite{KZ14b}, \cite{iP16}, \cite{PR16}, \cite{tB17},   and the references given there.

{There are comparatively few papers on higher-order correlation functions of zeros. Well-known results are due to Bleher and Di~\cite{BD97}, \cite{BD04} who studied the correlations between real zeros for elliptic and
Kac polynomials, and to Tao and Vu~\cite{TV15} who proved  asymptotic universality for the mixed correlation functions for elliptic, Weil, and Kac polynomials under some moment conditions on $\xi_i$.}

Our next result connects the limit density of tuples of conjugate algebraic numbers with the correlation function of zeros of the following random polynomial.

\begin{theorem}\label{1558}
Let $p\in(0,\infty]$ and let $\eta_0,\eta_1,\dots,\eta_n$ be i.i.d. real random variables {with a probability density function} given by
\begin{equation}\label{1422}
f(t):=
\left\{
  \begin{array}{ll}
    \frac{1}{2\Gamma\left(1+\frac1p\right)}e^{-|t|^p}, & p<\infty, \\
    \frac{1}{2}\ind_{[-1,1]}(t), & p=\infty.
  \end{array}
\right.
\end{equation}
Consider the random polynomial defined as
\begin{equation}\label{1956}
G_{p,\bw}(z):=\sum_{i=0}^{n}\frac{\eta_i}{w_i} z^i.
\end{equation}
Then, the mixed $(k,l)$-correlation function of zeros of $G_{p,\bw}$ coincides with the function $\rho_{p,\bw,k,l}$ from Theorem~\ref{2235}.
\end{theorem}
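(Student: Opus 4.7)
The plan rests on a single structural observation: the multiset of zeros of a polynomial is invariant under multiplication by a nonzero constant, so for a random polynomial $q_a(z)=\sum_k a_k z^k$ the distribution of its zeros depends only on the projective class of the coefficient vector $a$. I shall show that both the density $\rho_{p,\bw,k,l}$ delivered by Theorem~\ref{2235} and the $(k,l)$-correlation function of $G_{p,\bw}$ are governed by the same angular (``cone'') measure on the unit $l_{p,\bw}$-sphere $\partial\mathbb{B}_{p,\bw}^{n+1}$.

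First, I would write $\Phi_{p,\bw,k,l}(\h,B)=\sum_{q\in\mathcal{P}^*_{p,\bw}(\h)}N_B(q)$, where $N_B(q)$ counts the ordered distinct $(k,l)$-tuples of zeros of $q$ lying in $B$. Reducible integer polynomials of degree $n$ form a thin set and contribute $o(\h^{n+1})$, so they may be discarded. Handling primitivity by M\"obius inversion and exploiting the scale invariance $N_B(dq')=N_B(q')$, together with the factor $\tfrac{1}{2}$ from the sign of the leading coefficient, yields
\[
\Phi_{p,\bw,k,l}(\h,B)=\tfrac{1}{2}\sum_{d\geq 1}\mu(d)\sum_{q'\in\mathcal{P}_{p,\bw}(\h/d)}N_B(q')+o(\h^{n+1}).
\]
By standard equidistribution of integer lattice points, $\sum_{q'\in\mathcal{P}_{p,\bw}(R)}N_B(q')\sim R^{n+1}\int_{\mathbb{B}_{p,\bw}^{n+1}}N_B(q_a)\,\dd a$, and summation of $\mu(d)/d^{n+1}$ produces the factor $1/\zeta(n+1)$, whence
\[
\lim_{\h\to\infty}\frac{\Phi_{p,\bw,k,l}(\h,B)}{\h^{n+1}}=\frac{\Vol_{n+1}(\mathbb{B}_{p,\bw}^{n+1})}{2\zeta(n+1)}\,\E\bigl[N_B(q_U)\bigr],
\]
with $U$ uniform on $\mathbb{B}_{p,\bw}^{n+1}$. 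Matching this with Theorem~\ref{2235} and applying \eqref{eq-rhokl} identifies $\rho_{p,\bw,k,l}$ with the $(k,l)$-correlation function $\rho^{U}_{k,l}$ of the zeros of $q_U$.

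To finish I would prove $\rho^{U}_{k,l}=\rho^{G_{p,\bw}}_{k,l}$ by a polar-coordinate argument. Since $N_B(q_{ta})=N_B(q_a)$ for $t>0$, each correlation function depends only on the push-forward of the coefficient distribution under $a\mapsto a/l_{p,\bw}[q_a]$. The density of $U$ is proportional to the indicator of $\mathbb{B}_{p,\bw}^{n+1}$, while the coefficient vector of $G_{p,\bw}$ has density $\prod_{i=0}^n w_i f(w_i a_i)$, which is a constant multiple of $\exp(-l_{p,\bw}[q_a]^p)$ for $p<\infty$ and the normalized indicator of $\mathbb{B}_{\infty,\bw}^{n+1}$ for $p=\infty$. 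In each case the density depends only on $l_{p,\bw}[q_a]$, so both polar factorizations yield the same angular measure (the cone measure on $\partial\mathbb{B}_{p,\bw}^{n+1}$), and the two correlation functions coincide.

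The main obstacle is making the lattice-point step uniform in $d$ so that the M\"obius series may be interchanged with the asymptotic limit: this requires a remainder of the type $\#(\Z^{n+1}\cap R\cdot\mathbb{B}_{p,\bw}^{n+1})=\Vol_{n+1}(\mathbb{B}_{p,\bw}^{n+1})\,R^{n+1}+O(R^n)$ together with a dominated-convergence argument for the tail of divisors. One must also verify that $a\mapsto N_B(q_a)$ is Riemann-integrable, reducing to showing that the set of $a$ for which $q_a$ has a zero on $\partial B$ or a vanishing discriminant has measure zero, and to a separate bound on the reducible integer polynomials. The concluding polar-decomposition step is essentially elementary once the radial-angular factorization associated with the norm $l_{p,\bw}$ is invoked.
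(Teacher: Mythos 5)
Your argument is correct, and its number-theoretic half is essentially the paper's own: the reduction of $\Phi_{p,\bw,k,l}(\h,B)$ to a weighted lattice-point count over $\mathbb{B}_{p,\bw}^{n+1}$, the M\"obius inversion producing $1/\zeta(n+1)$ (Lemma~\ref{lm-prim-count}), the $O(\h^n\log^{\chi}\h)$ bound on reducible polynomials, and the factor $\tfrac12$ from the sign of the leading coefficient all appear in Section~\ref{sc-proof-alg-n}; the paper organizes the lattice count through the level sets $A_m=\{a\in\mathbb{B}_{p,\bw}^{n+1}:\mu_{q_a,k,l}(B)=m\}$, which is precisely your Riemann-integrability reduction made explicit. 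The genuine divergence is in the last step, identifying the zeros of the uniform-on-ball polynomial with those of $G_{p,\bw}$. The paper (Lemma~\ref{2252}) invokes the representation theorem of Barthe, Gu\'edon, Mendelson and Naor (Theorem~\ref{thm-unif-ball}): with $Z$ an independent exponential variable, $(\eta_0,\dots,\eta_n)\big/\bigl(\sum_i|\eta_i|^p+Z\bigr)^{1/p}$ is uniform on $\mathbb{B}_p^{n+1}$, and then uses the scale invariance of zeros. You instead observe that both coefficient densities --- the normalized indicator of the ball and $\prod_i w_i f(w_i a_i)\propto e^{-(l_{p,\bw}[q_a])^p}$ --- are functions of $l_{p,\bw}[q_a]$ alone, so the polar factorization $\dd a=(n+1)r^n\,\dd r\,\dd\nu(\theta)$ adapted to this norm pushes both measures forward to the same cone measure $\nu$ on $\partial\mathbb{B}_{p,\bw}^{n+1}$; since the zero set of $q_a$ depends only on $\theta=a/l_{p,\bw}[q_a]$, the two point processes, and hence all mixed correlation functions, coincide. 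Your route is self-contained and slightly more elementary --- it is in effect the Schechtman--Zinn observation underlying the cited theorem, without the auxiliary exponential variable --- while the paper's citation spares the reader the verification of the polar factorization. Both arguments are complete modulo the uniformity-in-$d$ and Riemann-integrability checks that you correctly flag and that the paper carries out via its lattice-point lemmas.
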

The exact formula for $\rho_{p,\bw,k,l}$  is given in Section~\ref{sc-corr-func}. {Let us now} formulate one important special case.

\subsection{Bombieri 2-Norm}
The next theorem shows that the way of counting algebraic numbers with respect to the Bombieri 2-norm is in some sense the most natural.  It has been  shown in~\cite{EK95} that in the case of Bombieri 2-norm the zeros of
the corresponding random polynomial (sometimes called the \emph{elliptic} random polynomial) have a very simple density.
\begin{theorem}[Edelman--Kostlan \cite{EK95}]\label{1559}
Assume that
$
\bw=\left(\binom{n}{i}^{-1/2}\right)_{i=0}^n.
$
Then
$$
\rho_{2,\bw,1,0}(x)=\frac{\sqrt{n}}{\pi(1+x^2)}.
$$
\end{theorem}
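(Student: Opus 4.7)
By Theorem~\ref{1558} applied with $p=2$ and $\bw=\bigl(\binom{n}{i}^{-1/2}\bigr)_{i=0}^{n}$, the function $\rho_{2,\bw,1,0}$ coincides with the density of real zeros of the Gaussian random polynomial
\[
G_{2,\bw}(z)=\sum_{k=0}^{n}\sqrt{\binom{n}{k}}\,\eta_k\,z^k,
\]
where the $\eta_k$ are i.i.d. with density $\pi^{-1/2}e^{-t^2}$, i.e.\ centered Gaussian of variance $1/2$. So the task reduces to a Kac--Rice computation for a smooth centered Gaussian process on $\R$, and the classical argument of Edelman and Kostlan applies verbatim.

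The first step is to compute the covariance kernel
\[
K(x,y):=\E\bigl[G_{2,\bw}(x)G_{2,\bw}(y)\bigr]=\tfrac12\sum_{k=0}^{n}\binom{n}{k}(xy)^{k}=\tfrac12(1+xy)^{n}.
\]
For a centered Gaussian process on the line the Kac--Rice formula collapses to the Edelman--Kostlan identity
\[
\rho_{1,0}(x)=\frac{1}{\pi}\sqrt{\partial_{x}\partial_{y}\log K(x,y)\Big|_{y=x}}.
\]
An elementary differentiation gives $\partial_{x}\partial_{y}\log K(x,y)=n/(1+xy)^{2}$; multiplicative constants in $K$ vanish under $\log$ and then under differentiation, so the prefactor $\tfrac12$ is irrelevant. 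Evaluating at $y=x$ and taking the square root yields $\rho_{2,\bw,1,0}(x)=\sqrt{n}/\bigl[\pi(1+x^{2})\bigr]$, as claimed.

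A conceptually cleaner alternative I would also mention uses the Veronese-type embedding $\bphi:\R\to S^{n}\subset\R^{n+1}$ defined by
\[
\bphi(x):=(1+x^{2})^{-n/2}\Bigl(\sqrt{\tbinom{n}{0}},\;\sqrt{\tbinom{n}{1}}\,x,\;\dots,\;\sqrt{\tbinom{n}{n}}\,x^{n}\Bigr),
\]
under which $G_{2,\bw}(x)=(1+x^{2})^{n/2}\langle\bphi(x),\bxi\rangle$ for a standard Gaussian vector $\bxi\in\R^{n+1}$. By rotational invariance of $\bxi$, the expected number of zeros of $G_{2,\bw}$ in $[a,b]$ equals $1/\pi$ times the spherical arclength of $\bphi([a,b])$. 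A direct chain-rule computation gives $\|\bphi'(x)\|=\sqrt{n}/(1+x^{2})$, recovering the same formula and, as a corollary upon integration over $\R$, the Edelman--Kostlan value $\sqrt{n}$ for the expected number of real roots.

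No genuine obstacle arises: the only point requiring comment is the justification of the Kac--Rice/Edelman--Kostlan identity in this setting, but since the sample paths of $G_{2,\bw}$ are real-analytic and the joint law of $(G_{2,\bw}(x),G_{2,\bw}'(x))$ is a non-degenerate Gaussian at every $x\in\R$, this is standard and is precisely the content of~\cite{EK95}.
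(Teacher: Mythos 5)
Your argument is correct: the reduction via Theorem~\ref{1558} to the Gaussian polynomial with covariance $\tfrac12(1+xy)^n$, followed by the Kac--Rice/Edelman--Kostlan identity $\rho_{1,0}(x)=\pi^{-1}\sqrt{\partial_x\partial_y\log K(x,y)}\big|_{y=x}$ (or equivalently the spherical arclength of the Veronese curve), is exactly the derivation in the cited source~\cite{EK95}, which the paper itself invokes without reproducing a proof. Nothing is missing; the nondegeneracy and analyticity remarks you make are the right hypotheses to check for Kac--Rice.
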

Thus for any degree $n$ the asymptotic density of algebraic numbers counted with respect to Bombieri 2-norm coincides with the normalized Cauchy density. In particular,  Theorem~\ref{2235} implies the following.
\begin{corollary}
We have
$$
\lim_{\h\to\infty}\frac{\Phi_{2,\bw,1,0}(\h,B)}{\h^{n+1}}=\frac{\sqrt{n}\Vol_{n+1}(\mathbb{B}_{2,\bw}^{n+1})}{2 \pi\, \zeta(n+1)} \int_B \frac{\dd x}{1+x^2}.
$$
\end{corollary}
The volume $\Vol_{n+1}(\mathbb{B}_{2,\bw}^{n+1})$ can be calculated as
\[
\Vol_{n+1}(\mathbb{B}_{2,\bw}^{n+1}) = \frac{\pi^{\frac{n+1}{2}}}{\Gamma\!\left(1+\frac{n+1}{2}\right)} \sqrt{\prod_{i=0}^n \binom{n}{i}} =
\frac{(\pi\, n!)^{\frac{n+1}{2}}}{\Gamma\!\left(1+\frac{n+1}{2}\right) \prod_{i=0}^n i!}.
\]

\section{General Formula for $\rho_{p,\bw,k,l}$}\label{sc-corr-func}

Recall that $f_0,\dots,f_n$ {denote} the probability density functions of the coefficients $\xi_0,\dots,\xi_n$ of $G$ and $\rho_{k,l}$ denotes the mixed correlation function of its zeros; see~\eqref{1914} and~\eqref{eq-rhokl}.  For $m=1,\dots,n,$ consider a function $\rho_m:\C^m\to\R$ defined {as}
\begin{multline}\label{eq-rho-b}
\rho_m(z_1,\dots,z_m):= \prod_{1\le i < j \le m} |z_i - z_j|\\
\times\int\limits_{\mathbb{R}^{n-m+1}} \prod_{i=0}^{n}f_i\left(\sum_{j=0}^{n-m}(-1)^{m-i+j}\sigma_{m-i+j}(z_1,\dots,z_m)t_j\right) \prod_{i=1}^m \Bigg|\sum_{j=0}^{n-m} t_j z_i^j\Bigg|\dd t_0\dots \dd t_{n-m},
\end{multline}
where we used the following notation  for the elementary symmetric polynomials:
\begin{equation}\label{1738}
\sigma_i(z_1,\dots,z_m) :=
\left\{
  \begin{array}{ll}
    1, & \hbox{if}\quad i=0,\\
    \sum_{1\le j_1 < \dots < j_i\le m} z_{j_1} z_{j_2} \dots z_{j_i}, & \hbox{if}\quad 1\leq i\le m, \\
    0, & \hbox{otherwise.}
  \end{array}
\right.
\end{equation}
It is tacitly assumed that the arguments of $f_i$'s are well-defined (i.e., real):
we shall restrict ourselves \tc{by considering only those points} $(z_1,\dots,z_m)$  such that all symmetric functions of them are real.

Introduce as well the absolute value of the  Vandermonde determinant:
\begin{equation}\label{1330}
  \rv_m(z_1,\dots,z_m):=\prod_{1\le i < j \le m} |z_i - z_j|.
\end{equation}

It was proved in~\cite{GKZ15b} that the correlation functions of real zeros of $G$ are given by~\eqref{eq-rho-b}:
$$
\rho_{k,0}(\bx)=\rho_{k}(\bx)
$$
for all $\bx\in\R^k$. The following theorem {generalises} this relation to all mixed $(k,l)$-correlation functions.

\begin{theorem}\label{thm-corr-rel}
For all $(\bx,\bz)\in\R^k\times\C_+^l$,
\begin{equation}\label{eq-corr-rel}
\rho_{k,l}(\bx,\bz) = 2^l \rho_{k+2l}(\bx,\bz,\bar\bz),
\end{equation}
where $\rho_{k+2l}$ is defined in~\eqref{eq-rho-b}. Specifically,
\begin{multline}\label{2008}
  \rho_{k,l}(\bx,\bz) =2^l\rv_{k+2l}(\bx,\bz,\bar\bz)\int\limits_{\mathbb{R}^{n-k-2l+1}} \prod_{i=0}^{n}f_i\left(\sum_{j=0}^{n-k-2l}(-1)^{k-i+j}t_j\sigma_{k+2l-i+j}(\bx,\bz,\bar\bz)\right)  \\
    \times \prod_{i=1}^k \Bigg|\sum_{j=0}^{n-k-2l} t_j x_i^j\Bigg|\cdot \prod_{i=1}^l \Bigg|\sum_{j=0}^{n-k-2l} t_j z_i^j\Bigg|^2\dd t_0\dots \dd t_{n-k-2l}.
\end{multline}
\end{theorem}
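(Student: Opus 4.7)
The plan is to reduce to the all-real case proved in~\cite{GKZ15b} by a direct change of variables in the coefficient integral that defines the correlation function. For a generic $(\bx,\bz)\in\R^k\times\C_+^l$ with $z_j=u_j+iv_j$, $v_j>0$, the polynomial $G$ has zeros at $x_1,\dots,x_k,z_1,\bar z_1,\dots,z_l,\bar z_l$ exactly when $G(z)=P(z)R(z)$, where
\begin{equation*}
P(z):=\prod_{i=1}^k(z-x_i)\prod_{j=1}^l(z-z_j)(z-\bar z_j)\in\R[z],\qquad R(z):=\sum_{s=0}^{n-k-2l}t_s z^s\in\R[z].
\end{equation*}
This defines a polynomial map $\Psi\colon(x_1,\dots,x_k,u_1,v_1,\dots,u_l,v_l,t_0,\dots,t_{n-k-2l})\mapsto(\xi_0,\dots,\xi_n)$ between open subsets of $\R^{n+1}$ that, once an ordering of the chosen $(k,l)$-tuple is fixed, is a local diffeomorphism off a lower-dimensional exceptional set.

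The central task is to verify the Jacobian identity
\begin{equation}\label{plan-jac}
|{\det D\Psi}|=2^l\,\rv_{k+2l}(\bx,\bz,\bar\bz)\,\prod_{i=1}^k|R(x_i)|\,\prod_{j=1}^l|R(z_j)|^2.
\end{equation}
My approach is to first view the $\xi_i$ as polynomials in $m=k+2l$ formal zero variables $y_1,\dots,y_m$ and in $\bt$. The identity
\begin{equation*}
\det\frac{\partial(\xi_0,\dots,\xi_n)}{\partial(y_1,\dots,y_m,t_0,\dots,t_{n-m})}=\prod_{1\le i<j\le m}(y_j-y_i)\,\prod_{i=1}^m R(y_i),
\end{equation*}
which~\cite{GKZ15b} establishes for real $y_i$, is a polynomial identity in $(y_1,\dots,y_m,\bt)$ and thus extends to arbitrary complex $y_i$. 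Specialising to $(y_1,\dots,y_m)=(\bx,\bz,\bar\bz)$ and converting each pair $(\partial_{z_j},\partial_{\bar z_j})$ of Wirtinger derivatives to $(\partial_{u_j},\partial_{v_j})$ via $|\det\partial(z_j,\bar z_j)/\partial(u_j,v_j)|=2$ produces the extra factor $2^l$, while $|R(\bar z_j)|=|R(z_j)|$ (since $R\in\R[z]$) turns $|R(z_j)R(\bar z_j)|$ into $|R(z_j)|^2$, giving~\eqref{plan-jac}. A sanity check in the case $n=2,k=0,l=1$, where $\xi_0=u^2+v^2$ and $\xi_1=-2u$, gives $|\det D\Psi|=4v=2\cdot|z-\bar z|$ and confirms the factor~$2^l$.

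Granting~\eqref{plan-jac}, the remainder is routine. For disjoint Borel sets $B_1,\dots,B_k\subset\R$ and $B_{k+1},\dots,B_{k+l}\subset\C_+$, the left-hand side of~\eqref{eq-rhokl} equals the expected number of ordered $(k,l)$-tuples of zeros of $G$ in $B_1\times\cdots\times B_{k+l}$, and the generic injectivity of $\Psi$ allows one to rewrite the coefficient integral $\int\prod_j f_j(\xi_j)\,N(\xi)\,\dd\xi$ as
\begin{equation*}
\int_{B_1\times\cdots\times B_{k+l}}\Bigl(\int_{\R^{n-k-2l+1}}\prod_{i=0}^n f_i\bigl(\xi_i(\bx,\bz,\bar\bz,\bt)\bigr)\,|{\det D\Psi}|\,\dd\bt\Bigr)\dd\bx\,\dd\bz.
\end{equation*}
Substituting~\eqref{plan-jac} and comparing the inner integral with the defining formula~\eqref{eq-rho-b} for $\rho_{k+2l}$ at the arguments $(\bx,\bz,\bar\bz)$ yields both~\eqref{eq-corr-rel} and the explicit expression~\eqref{2008}. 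The principal obstacle is the Jacobian identity~\eqref{plan-jac}; as an alternative to the analytic-continuation route one can prove it by a direct block-matrix computation exploiting $G'(x_i)=P'(x_i)R(x_i)$, $G'(z_j)=P'(z_j)R(z_j)$ together with the classical factorisation $\prod_i P'(x_i)\prod_j P'(z_j)P'(\bar z_j)=\pm\,\rv_{k+2l}^2(\bx,\bz,\bar\bz)$, i.e., up to sign the discriminant of $P$.
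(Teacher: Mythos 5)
Your argument is correct, and it arrives at \eqref{2008} by a genuinely different organization of the same underlying change of variables. You make one global substitution in all $n+1$ coefficient directions through the factorization map $\Psi$ sending $(\bx,\bz,\bt)$ to the coefficients of $P\cdot R$, and you obtain its Jacobian by observing that the all-real Jacobian formula is a polynomial identity in the roots and therefore persists for complex roots, the factor $2^l$ coming from $|\det\partial(z_j,\bar z_j)/\partial(u_j,v_j)|=2$; this is a clean and valid argument. The paper instead splits the substitution into two steps: it solves the Vandermonde system for the $k+2l$ lowest coefficients $\be(\bx,\bz)$ as functions of the roots and the tail coefficients, applies the coarea formula (Lemma~\ref{lm-coarea}) only in the $k+2l$ root directions, computes $|\det J_{\be}|=2^l\varphi$ by the block-matrix manipulation of Lemma~\ref{1200}, and then identifies the outcome with $\rho_{k+2l}$ via the triangular substitution \eqref{1819} in the tail coefficients --- which encodes exactly your factorization $G=PR$ together with $G'(y)=P'(y)R(y)$ at the roots, cf.~\eqref{1827}. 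Your route buys a shorter Jacobian computation at the price of importing the real-case identity from \cite{GKZ15b} (or proving it by your suggested block computation, which essentially reproduces Lemma~\ref{1200}); the paper's version is longer but self-contained. One phrase should be corrected: $\Psi$ is \emph{not} generically injective on $B_1\times\dots\times\tilde B_{k+l}\times\R^{n-k-2l+1}$, since a coefficient vector whose polynomial has several zeros in a single $B_i$ has several preimages there; in fact the fiber cardinality is precisely the random variable $\prod_i\mu(B_i)$ whose expectation you are computing. The correct tool for rewriting the coefficient integral is therefore the area formula for a non-injective Lipschitz map (the equal-dimension case of Lemma~\ref{lm-coarea}), which counts preimages with multiplicity; with that wording your argument is complete.
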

The proof of Theorem~\ref{thm-corr-rel} is  given in Section~\ref{1245}.

%It is interesting to
{Note that  the correlations between real zeros and the correlations between complex zeros are essentially given by the {\it same} function $\rho_m$. In particular, $\rho_2$ provides a}  formula for the density of
complex zeros as well as for the two-point correlation function of real zeros:
$$
\rho_{0,1}(z)=2\rho_2(z,\bar z),\quad \rho_{2,0}(x,y)=\rho_2(x,y).
$$

Let us give some examples. Taking $k=1,l=0$ in~\eqref{2008} yields {a} formula for the density of real zeros.
\begin{corollary}
We have
\begin{align*}
\rho_{1,0}(x) =\int\limits_{\mathbb{R}^{n}} \prod_{i=0}^{n}f_i\left(t_{i-1}-xt_i\right)\Bigg|\sum_{j=0}^{n-1} t_j x^j\Bigg|\dd t_0\dots \dd t_{n-1},
\end{align*}
where we set $t_{-1}=t_n:=0$.
\end{corollary}
This formula (with different notations) was obtained in~\cite{dZ05}.

Taking $k=0,l=1$ yields {a} formula for the density of complex zeros.
\begin{corollary}
We have
\begin{align*}
\rho_{0,1}(z)=4|\Im z|\int\limits_{\mathbb{R}^{n-1}} \prod_{i=0}^{n}f_i(t_{i-2}-2t_{i-1}\Re z+t_i|z|^2)
\Bigg|\sum_{j=0}^{n-2} t_j z^j\Bigg|^2\dd t_0\dots \dd t_{n-2},
\end{align*}
where we set $t_{-2}=t_{-1}=t_{n-1}=t_n:=0$.
\end{corollary}
This formula (with different notations) was obtained in~\cite{dZ04}.

Taking $k=n-2l$ we obtain the (non-normalized) joint density of all zeros given that $G$ has exactly $n-2l$ real zeros.
\begin{corollary}
	We have
\begin{align}\label{1638}
\rho_{n-2l,l}(\bx,\bz)=2^l\rv_{n}(\bx,\bz,\bar\bz)\int\limits_{\R }|t|^n \prod_{i=0}^{n}f_i\left((-1)^{n-i}t\sigma_{n-i}(\bx,\bz,\bar\bz)\right)\dd t.
\end{align}
\end{corollary}
Recall that $\mu$ denotes the empirical measure counting the zeros of $G$. It is easy to derive {from~\eqref{eq-rhokl}} that
\begin{align*}
\E\left[\frac{\mu(\R)!}{(\mu(\R)-n+2l)!}\frac{\mu(\C_+)!}{(\mu(\C_+)-l)!}\right]=\int_{\R^{n-2l}}\int_{\C^{l}_+}\rho_{n-2l,l}(\bx,\bz)\dd\bx\dd\bz,
\end{align*}
where we used the convention {$0!:=1$ and $q!:=\infty$ for any integer $q < 0$}.
Since with probability one $\mu(\R)+2\mu(\C_+)=n$, the random variable under expectation is non-zero if and only if $\mu(\R)=n-2l$. Thus we obtain the probability that $G$  has exactly $n-2l$ real zeros.
\begin{corollary}
	We have
\begin{multline*}
\P[\mu(\R)=n-2l]=\frac{2^l}{l!(n-2l)!}\int_{\R^{n-2l}}\int_{\C^{l}_+}\rv_{n}(\bx,\bz,\bar\bz)\\\times\int\limits_{\R }|t|^n \prod_{i=0}^{n}f_i\left((-1)^{n-i}t\sigma_{n-i}(\bx,\bz,\bar\bz)\right)\dd t\dd\bx\dd\bz.
\end{multline*}
\end{corollary}
This formula has been obtained earlier in~\cite{dZ04}.

From Theorems \ref{1558} and \ref{thm-corr-rel}, the following statement {immediately follows}.
\begin{corollary}\label{2111}
\tc{\tb{An} explicit representation for the integrand $\rho_{p,\bw,k,l}(\bx,\bz)$ of Theorem~\ref{2235} can be obtained from~\eqref{2008} } \tb{by choosing the densities of coefficients as}
\[
f_i(t) =
\left\{
  \begin{array}{ll}
    \frac{w_i}{2\Gamma\left(1+\frac1p\right)}e^{-|w_i t|^p}, & p<\infty, \\
    \frac{w_i}{2}\ind_{[-1,1]}(w_i t), & p=\infty.
  \end{array}
\right.
\]
\end{corollary}

In the case $k=n-2l$ the formula for $\rho_{p,\bw,k,l}(\bx,\bz)$ can be considerably simplified \tb{as shown in the} next theorem. Before formulating it we introduce the following notion: for an arbitrary monic polynomial $q(z):=(z-z_1)\dots(z-z_n)$ define its discriminant as
$$
\mathrm{D}[q]=\mathrm{D}[z_1,\dots,z_n]:=\prod_{i<j}(z_i-z_j)^2.
$$
\begin{theorem}\label{1600}
We have
$$
\rho_{p,\bw,n-2l,l}=\frac{2^{l+1}}{(n+1)\Vol_{n+1} (\mathbb{B}_{p,\bw}^{n+1})}\, \frac{\sqrt{|\mathrm{D}[q]|}\phantom{^{n+1}}}{(l_{p,\bw}[q])^{n+1}},
$$
where $q$ is the monic polynomial whose zeros are the arguments of $\rho_{p,\bw,n-2l,l}$:
$$
q(z):=(z-x_1)\dots(z-x_k)(z-z_1)(z-\overline{z_1})\dots(z-z_l)(z-\overline{z_l}).
$$
\end{theorem}
The proof will be presented in Section~\ref{sc-proof-add-ident}.

\section{Proof of Theorems \ref{2235}, \ref{1224} and \ref{1558}}\label{sc-proof-alg-n}

\subsection{Methods of Proof: Counting Integer Points}

We reduce counting algebraic numbers to counting corresponding minimal polynomials represented by the vectors of their coefficients. So we need to formulate some statements about counting integer points in multidimensional regions.

Estimating the number of integer points in a region  by its measure  is a well-known idea. The most <<ancient>> publication (that relates  integer points counting to the volume of a region), which the authors are aware
of, is a result by Lip\-schitz~\cite{rL65}. See as well the  classical monograph by Bachmann \cite[pp. 436--444]{pB94} (in particular, formulas~(83a) and~(83b) on pages 441--442).
There are a number of papers generalizing such estimates to arbitrary lattices, see e.g. \cite{MV07} and \cite{BW14}.

For a Borel set $A\subset\R^d$ denote by $\lambda(A)$ the number of points in $A$ with integer coordinates, and by $\lambda^*(A)$ the number of points in $A$ with coprime integer coordinates:
\[
\lambda(A) = \#\left(A\cap \mathbb{Z}^d\right),
\]
\[
\lambda^*(A) = \#\left\{(x_1,\dots,x_d)\in A\cap \mathbb{Z}^d : \gcd(x_1,\dots,x_d)=1\right\}.
\]
For a real number $r$ and a set $S\subset \mathbb{R}^d$ let
\[
rS = \{r\bx : \bx\in S \}.
\]

\begin{lemma}\label{lm-bas-count}
Let $d\ge 2$ be an integer. Let $A\subset\mathbb{R}^d$ be a fixed bounded region. If the boundary $\partial A$ of $A$ has Lebesgue measure 0, then
\[
\lim_{Q\to\infty} \frac{\lambda(QA)}{Q^d} = \Vol_d(A).
\]
\end{lemma}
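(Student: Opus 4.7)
The plan is the classical unit-cube lattice-point count, with the boundary contribution absorbed by the hypothesis $\Vol_d(\partial A)=0$. To every $\bz\in\Z^d$ I attach the half-open unit cube $C_\bz:=\bz+[0,1)^d$; these cubes tile $\R^d$ with unit volume, so
\[
\Vol_d(QA)=\sum_{\bz\in\Z^d}\Vol_d(C_\bz\cap QA),
\]
whereas $\lambda(QA)=\sum_{\bz\in\Z^d}\ind_{QA}(\bz)$. If $C_\bz\subset QA$ or $C_\bz\cap QA=\emptyset$, the two summands agree; any discrepancy must therefore come from cubes that meet $\partial(QA)$, so
\[
\bigl|\lambda(QA)-\Vol_d(QA)\bigr|\le N(Q):=\#\bigl\{\bz\in\Z^d:C_\bz\cap\partial(QA)\ne\emptyset\bigr\}.
\]

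Each cube contributing to $N(Q)$ has diameter $\sqrt d$ and therefore lies inside the closed $\sqrt d$-neighborhood of $\partial(QA)$. Since the cubes are pairwise disjoint, $N(Q)$ is bounded by the Lebesgue volume of that neighborhood. Using $\partial(QA)=Q\,\partial A$ and rescaling by $1/Q$, that volume equals $Q^d\Vol_d(V_{1/Q})$, where $V_\delta:=\{x\in\R^d:\operatorname{dist}(x,\partial A)\le\sqrt d\,\delta\}$. Combined with $\Vol_d(QA)=Q^d\Vol_d(A)$ this yields
\[
\left|\frac{\lambda(QA)}{Q^d}-\Vol_d(A)\right|\le\Vol_d(V_{1/Q}).
\]

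It remains to show that $\Vol_d(V_{1/Q})\to 0$ as $Q\to\infty$. The sets $V_\delta$ form a decreasing family of bounded sets (bounded because $A$ is), and their intersection over $\delta\to 0^+$ is exactly $\partial A$, since $\partial A$ is closed. Continuity from above of Lebesgue measure then gives $\Vol_d(V_\delta)\to\Vol_d(\partial A)=0$, finishing the proof. The whole argument is essentially routine lattice counting by tiling; the only mildly delicate point is this last step, which converts the qualitative hypothesis $\Vol_d(\partial A)=0$ into the quantitative vanishing of the thin-tube volumes surrounding $\partial A$.
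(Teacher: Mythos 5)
Your proof is correct and is exactly the standard cube-tiling argument that the paper itself invokes (the paper merely cites Lang's ``coverings of $A$ by cubes of edge $Q^{-1}$,'' which after rescaling by $Q$ is your tiling of $QA$ by unit cubes). The one step the citation leaves implicit — converting $\Vol_d(\partial A)=0$ into the vanishing of the volumes of the shrinking tubes $V_{1/Q}$ via continuity from above — you handle correctly, so nothing is missing.
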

Note that the requirement of $A$ to be just Lebesgue measurable doesn't suffice, and the boundary $\partial A$ must be of Lebesgue measure $0$ to ensure the existence of the limit. For example, if one takes $A$ to be the set of points in
$[0,1]^d$ with rational coordinates, then for any positive integer $Q$ the set $QA$ contains $\approx Q^d$ integer points, but $A$ has  Lebesgue measure 0. Notice that in this case $\partial A=[0,1]^d$.
\begin{proof}
The lemma can be easily proved if one considers coverings of $A$ by $d$-dimensional cubes with edge $Q^{-1}$. See, for example, \cite[Chapter VI \S 2]{sL70}. Note that none of the conditions of the lemma  can be
omitted.
\end{proof}
Lemma \ref{lm-bas-count}  provides no estimates for the rate of convergence. Additionally, in this lemma the region $A$ is kept  fixed and therefore cannot depend on~$Q$.
To avoid all these restrictions
one needs to restrict oneself to a suitable class of regions. See \cite{hD51}, \cite{sL70}, \cite{pS95}.
One way is to employ the class provided in Daveport's paper \cite{hD51}. However, for our goals it is more natural to follow the approach used in~\cite{mW10}. To this end, we will need a notion of the Lipschitz class which was introduced in Section~\ref{sc-alg-numb}.

\begin{lemma}\label{lm-prim-pnt}
	Consider a bounded region $A\subset\mathbb{R}^d$, $d\geq2$, such that the boundary $\partial A$ of $A$ is in $\Lip(M,L)$. 
	Then $A$ is Lebesgue measurable and 
	\begin{equation}\label{1317}
	\left|\frac{\lambda(QA)}{\h^d}-\Vol_d(A)\right|\le \frac{C}{\h},
	\end{equation}
	where $C$ depends on $d,L,M$ only.
\end{lemma}
\begin{proof}
The proof follows directly from~\cite[Theorem~5.4]{mW10}.
\end{proof}

Now we need to adapt the latter two lemmas to the integer points with coprime coordinates.

\begin{lemma}\label{lm-prim-count}
Under the assumptions of Lemma~\ref{lm-bas-count} we have
\begin{equation}\label{2155}
\lim_{Q\to\infty} \frac{\lambda^*(QA)}{Q^d} = \frac{\Vol_d(A)}{\zeta(d)},
\end{equation}
and under the assumptions of Lemma~\ref{lm-prim-pnt} we have
\begin{equation}\label{1318}
\left|\frac{\lambda^*(QA)}{\h^d}-\frac{\Vol_d(A)}{\zeta(d)}\right|\le C\frac{\log^{\chi_{d,0}}\h}{\h},
\end{equation}
where  $C$ depends on $d,L,M$ only and
\begin{equation}\label{2151}
\chi_{n,l}:=\left\{
\begin{array}{ll}
1, & n=2, \ l=0; \\
0, & \text{otherwise}.
\end{array}
\right.
\end{equation}
\end{lemma}
\begin{proof}
Using the inclusion-exclusion principle (Moebius inversion) one can easily show that
\[
\lambda^*(QA) = \sum_{r=1}^{[QN]+1} \mu(r)\, \lambda\!\left(\frac{Q}{r} A\right),
\]
where $N$ is a positive number such that $A\subseteq[-N,N]^d$, and $\mu(\cdot)$ is the Moebius function.

Let
\[
\theta_A(Q) := \frac{\lambda(QA)}{Q^d} - \Vol_d(A).
\]
Then we have
\[
\lambda^*(QA) = Q^d \left(\sum_{r=1}^{[QN]+1} \frac{\mu(r)}{r^d} \Vol_d(A) + \sum_{r=1}^{[QN]+1} \frac{\mu(r)}{r^d}\, \theta_A\!\left(\frac{Q}{r} \right)\right).
\]
From the well-known equality $\zeta(d)^{-1}=\sum_{r=1}^{\infty}\mu(r)r^{-d}$, the first sum can be estimated as
\begin{equation}\label{1344}
\left|\sum_{r=1}^{[QN]+1} \frac{\mu(r)}{r^d} - \frac{1}{\zeta(d)}\right| = \left|\sum_{r=[QN]+2}^{\infty} \frac{\mu(r)}{r^d}\right| \le \int_{QN}^\infty \frac{dx}{x^d} = \frac{1}{(d-1)(QN)^{d-1}}.
\end{equation}
From Lemma \ref{lm-bas-count} we infer that
for any $\epsilon>0$ there exists $Q_0 = Q_0(A,\epsilon)$ such that $|\theta_A(Q)|\le \epsilon$ for all $Q\ge Q_0$,
and there exists finite $\Theta_A=\max_{Q\ge 0} |\theta_A(Q)|$.
Hence
\[
\left|\sum_{r=1}^{[QN]+1} \frac{\mu(r)}{r^d}\, \theta_A\!\left(\frac{Q}{r} \right)\right| \le \epsilon \sum_{1\le r\le \frac{Q}{Q_0}} \frac{1}{r^d} +
\Theta_A\sum_{r>\frac{Q}{Q_0}} \frac{1}{r^d} \le \zeta(d)\,\epsilon + \frac{\Theta_A}{(d-1)} \left[\frac{Q}{Q_0(\epsilon)}\right]^{1-d}.
\]
Therefore, for any $\epsilon>0$
\[
\lim_{Q\to\infty} \left|\sum_{r=1}^{[QN]+1} \frac{\mu(r)}{r^d}\, \theta_A\!\left(\frac{Q}{r} \right)\right| \le \zeta(d)\,\epsilon,
\]
which proves~\eqref{2155}.

{To prove~\eqref{1318}, note that the bound \eqref{1317} in Lemma~\ref{lm-prim-pnt} can be written as
\[
|\theta_A(Q)|\le \frac{C}{Q},
\]
where the constant $C$ is the same as in \eqref{1317}.
Hence,
\[
\left|\sum_{r=1}^{[QN]+1} \frac{\mu(r)}{r^d}\, \theta_A\!\left(\frac{Q}{r} \right)\right|
\le \frac{C}{Q} \sum_{r=1}^{[QN]+1} \frac{1}{r^{d-1}}
\le \begin{cases}
C Q^{-1} \zeta(d-1), & d\ge 3,\\
C Q^{-1} \left(\log([QN]+1)+1\right), & d=2.
\end{cases}
\]
Now joining the latter estimate with the bound \eqref{1344}
and suitably choosing a new constant $C$, we obtain \eqref{1318}.}
\end{proof}

\subsection{Proof of Theorems~\ref{2235}, \ref{1224} and \ref{1558}}

Let us describe how to calculate $\Phi_{p,\bw,k,l}(\h,B)$.

Given a function $g:\C\to\R$ and a Borel subset $B\subset\R^k\times \C^l$ denote by $\mu_{g,k,l}(B)$ the number of ordered \tc{$(k+l)$}-tuples $(x_1,\dots,x_k,z_1,\dots,z_l)\in B$ of \emph{distinct}  numbers such that
$$
g(x_1)=\dots=g(z_l)=0.
$$
For any algebraic number its minimal polynomial is prime, and any prime polynomial is a minimal polynomial for some algebraic number. Therefore we have
\begin{equation}\label{1838}
\Phi_{p,\bw,k,l}(\h,B)= \sum_{q\in\mathcal{P}_{p,\bw}^*(\h)}\mu_{q,k,l}(B).
\end{equation}
On the other hand the right-hand side can obviously be written as
\begin{equation}\label{022}
\Phi_{p,\bw,k,l}(\h,B)= \sum_{m=0}^\infty m\cdot\#\{q\in\mathcal{P}_{p,\bw}^*(\h)\colon\mu_{q,k,l}(B)=m\}.
\end{equation}
Since $\mu_{q,k,l}(B)\leq n!/(n-k-2l)!$, the number of the summands  in the right-hand side is finite.

Consider a set $A_m\subset\mathbb{B}_{p,\bw}^{n+1}$  (which depends on $B$) consisting of all points $(a_0,\dots,a_n)\in\mathbb{B}_{p,\bw}^{n+1}$ such that
\[
\mu_{a_0+a_1x+\dots+a_nx^n,k,l}(B)=m.
\]
Then, by definition of a primitive polynomial,
$$
\#\{q\in\mathcal{P}_{p,\bw}(\h)\colon q\text{ is primitive and }\mu_{q,k,l}(B)=m\}=\lambda^*(QA_m).
$$
Hence it follows from the definition of a prime polynomial that
\begin{multline}\label{020}
\left|\#\{p\in\mathcal{P}_{p,\bw}^*(\h)\colon\mu_{q,k,l}(B)=m\} - \frac12 \lambda^*(QA_m)\right|
\\\leq\#\{q\in\mathcal{P}_{p,\bw}(\h)\colon q\text{ is reducible}\}.
\end{multline}
 Note that the factor $1/2$ arises  because prime polynomials have positive leading coefficients.
It is known (see~\cite{bW36}, or \cite{gK09}, \cite{aD14}) that
\begin{equation}\label{021}
\#\{q\in\mathcal{P}_{p,\bw}(\h)\colon q\text{ is reducible}\}=O\left(\h^n\log^{\chi_{n,0}}\h\right),\quad \h\to\infty,
\end{equation}
where $\chi_{n,0}$ is defined in~\eqref{2151}.
Note that for $n=2$ any reducible integer polynomial has only real roots. Thus~\eqref{020} and~\eqref{021} imply
$$
\left|\#\{p\in\mathcal{P}_{p,\bw}^*(\h)\colon\mu_{q,k,l}(B)=m\} - \frac12 \lambda^*(QA_m)\right|
=O\left(\h^n\log^{\chi_{n,l}}\h\right),\quad \h\to\infty.
$$
Applying this to~\eqref{022}, we obtain
\begin{equation}\label{245}
\Phi_{p,\bw,k,l}(\h,B)= \frac12 \sum_{m=0}^\infty m \lambda^*(\h A_m)+O\left(\h^n\log^{\chi_{n,l}}\h \right),\quad \h\to\infty.
\end{equation}

It follows from Lemma~\ref{lm-prim-count} that under the assumptions of Theorem~\ref{2235},
\begin{equation}\label{1445}
\lim_{\h\to\infty}\frac{\lambda^*(QA_m)}{\h^{n+1}}=\frac{\Vol_{n+1}(A_m)}{\zeta(n+1)}.
\end{equation}

To estimate the rate of convergence in~\eqref{1445} we need to use~\eqref{1318} which requires $A_m$ to be of Lipschitz class. This is established in the next lemma.
\begin{lemma}\label{lm-algebr}
Let $B\subset \R^k\times \C_+^l$ and $A_m$ be defined as above. If $B$ is of Lipschitz class $(M,L)$, 
then the boundary $\partial A_m$ is of Lipschitz class $(M_1,L_1)$ for some constants $M_1$, $L_1$ depending on $n, M, L, \bw$ only.
\end{lemma}
The proof of Lemma~\ref{lm-algebr} is postponed to \tc{the end of this section.} Now assuming Lemma~\ref{lm-algebr} we have from~\eqref{1318} that under the assumptions of Theorem~\ref{1224},
\begin{equation}\label{1446}
\left|\frac{\lambda^*(QA_m)}{\h^{n+1}}-\frac{\Vol_{n+1}(A_m)}{\zeta(n+1)}\right|\leq \frac{C}{\h},
\end{equation}
where $C$ depends on $n, M, L, \bw$ only. \tc{Note that since $n\geq2$, dimension of $A_m$ is at least 3, so there is no  factor $\log\h$ in the right-hand side of~\eqref{1446}}.

Thus having obtained~\eqref{245},~\eqref{1445}, and~\eqref{1446}, to prove Theorems~\ref{2235} and~\ref{1224}  we are left with the task of calculating of $\Vol_{n+1}(A_m)$. To this end, consider the random polynomial defined as
$$
\tilde G(z):=\sum_{k=0}^{n}\xi_k z^k,
$$
where the random vector $(\xi_0,\xi_1,\dots,\xi_n)$ is uniformly distributed over $\mathbb{B}_{p,\bw}^{n+1}$. By definition of $A_m$,
\begin{equation}\label{1430}
\P[\mu_{\tilde G,k,l}(B)=m]=\frac{\Vol_{n+1}(A_m)}{\Vol_{n+1}(\mathbb{B}_{p,\bw}^{n+1})}.
\end{equation}
The probability {on} the left-hand side is difficult to calculate \tc{due to dependence} of the coefficients of $\tilde G(z)$. However, the zeros of $\tilde G(z)$ do not change if we divide the polynomial by any
non-zero constant. By proper normalisation we can achieve independence of the coefficients.

\begin{lemma}\label{2252}
Let $p\in(0,\infty]$, and $\bw=(w_0, w_1,\dots,w_n)$ be a vector of positive weights. Assume that the random vector $(\xi_0,\xi_1,\dots,\xi_n)$ is uniformly distributed in $\mathbb{B}_{p,\bw}^{n+1}$
and that the random variables $\eta_0,\eta_1,\dots,\eta_n$ are  i.i.d. with
Lebesgue density
\begin{equation}\label{eq-dens}
f(t):=
\left\{
  \begin{array}{ll}
    \frac{1}{2\Gamma\left(1+\frac1p\right)}e^{-|t|^p}, & p<\infty, \\
    \frac{1}{2}\ind_{[-1,1]}(t), & p=\infty.
  \end{array}
\right.
\end{equation}
Then the polynomials $\tilde G(z)=\sum_{i=0}^n \xi_i z^i$ and
\begin{equation}\label{eq-G-ext}
G(z)=\sum_{i=0}^n \frac{\eta_i}{w_i} z^i
\end{equation}
have the same distribution of roots (both real and complex).
\end{lemma}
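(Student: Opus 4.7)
The key observation is the scale-invariance of roots: the polynomial $\sum_{k=0}^n c_k z^k$ and its rescaling $\sum_{k=0}^n (\lambda c_k) z^k$ share the same zeros for any $\lambda\neq 0$. Consequently, to establish that $\tilde G$ and $G$ agree in distribution as point processes of zeros, it suffices to show that the two coefficient vectors, projected onto the unit sphere $\partial\mathbb{B}_{p,\bw}^{n+1}$ of the norm $l_{p,\bw}$, have the same law. Write $Y=(\eta_0/w_0,\dots,\eta_n/w_n)$ for the coefficient vector of $G$.

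The case $p=\infty$ is immediate: $\eta_k/w_k$ is uniformly distributed on $[-1/w_k,1/w_k]$, and independence across $k$ makes $Y$ uniform on the product box $\prod_k [-1/w_k,1/w_k]$, which is precisely the unit $l_{\infty,\bw}$-ball $\mathbb{B}_{\infty,\bw}^{n+1}$. Thus $\xi$ and $Y$ themselves coincide in distribution, and there is nothing further to prove in that case.

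For $p<\infty$, I would first compute the density of $Y$: by independence and a linear change of variables, it is proportional to $\prod_k w_k f(w_k t_k) = c\exp\bigl(-\sum_k|w_k t_k|^p\bigr)=c\exp\bigl(-l_{p,\bw}[y]^p\bigr)$, where $y=\sum t_k z^k$. This depends on $(t_0,\dots,t_n)$ only through $l_{p,\bw}[y]$, i.e.\ it is \emph{radial} with respect to this norm. On the other hand, the density of $\xi$ is proportional to $\ind_{\mathbb{B}_{p,\bw}^{n+1}}$, also a radial function of $l_{p,\bw}$. The central ingredient is then a polar decomposition of Lebesgue measure with respect to $l_{p,\bw}$: via the change of variables $(r,u)\mapsto ru$ from $(0,\infty)\times\partial\mathbb{B}_{p,\bw}^{n+1}$ to $\R^{n+1}\setminus\{0\}$, Lebesgue measure factorises as $(n+1)\,r^n\,\dd r\,\dd\nu(u)$, where $\nu$ is the \emph{cone measure} on the unit sphere. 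It follows that every random vector $X$ whose density is a radial function of $l_{p,\bw}$ admits a representation $X = l_{p,\bw}[X]\cdot U$, with $U$ distributed according to the normalised cone measure on $\partial\mathbb{B}_{p,\bw}^{n+1}$, independent of $l_{p,\bw}[X]$, and crucially with a law that does not depend on the radial profile.

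Applying this polar decomposition separately to $\xi$ and to $Y$, their angular parts $\xi/l_{p,\bw}[\xi]$ and $Y/l_{p,\bw}[Y]$ share a common distribution on $\partial\mathbb{B}_{p,\bw}^{n+1}$. Combined with the initial scale-invariance observation, this yields equality in law of the zero processes of $\tilde G$ and $G$. The only technical point requiring care is the cone-measure factorisation itself; this is a routine consequence of the coarea formula (or a direct Jacobian computation for the map $(r,u)\mapsto ru$), so I do not anticipate any genuine obstacle beyond bookkeeping of the normalising constants.
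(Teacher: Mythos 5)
Your proof is correct, but it follows a genuinely different route from the paper's. The paper deduces the lemma from the representation theorem of Barthe, Gu\'edon, Mendelson and Naor (Theorem~\ref{thm-unif-ball} in the text): after observing that $(w_0\xi_0,\dots,w_n\xi_n)$ is uniform on the unweighted ball $\mathbb{B}_p^{n+1}$, that theorem supplies the explicit coupling
$(\xi_0,\dots,\xi_n)\eqdistr \bigl(w_0^{-1}\eta_0,\dots,w_n^{-1}\eta_n\bigr)\big/\bigl(\sum_{i}|\eta_i|^p+Z\bigr)^{1/p}$
with $Z$ exponential, and scale-invariance of the roots finishes the argument in two lines. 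You instead prove only what is actually needed --- that the two coefficient vectors have the same \emph{directional} law on $\partial\mathbb{B}_{p,\bw}^{n+1}$ --- by noting that both densities are radial functions of $l_{p,\bw}$ and invoking the cone-measure polar factorisation of Lebesgue measure. This is self-contained (the factorisation is an elementary Jacobian or coarea computation) and uses strictly less than the cited theorem, which identifies the full law on the ball including the radial part; the price is that you must establish the polar decomposition yourself, whereas the paper outsources it. Your separate treatment of $p=\infty$, where the two coefficient vectors coincide in law outright, is also correct. The reduction to equality of directions uses invariance of roots under positive rescaling only, which is exactly what your normalisation provides, so there is no gap there either.
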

To prove the lemma, we need the following probabilistic representation of the uniform measure on $\mathbb{B}_p^{n+1}$.
\begin{theorem}[Barthe et al.]\label{thm-unif-ball}
Let $p>0$ and let $\eta_0,\eta_1,\dots,\eta_n$ be i.i.d. random variables with p.d.f. \eqref{eq-dens}.
Let $Z$ be an exponential random variable (i.e., the p.d.f. of $Z$ is $e^{-t},t\geq0$) independent of $\eta_0,\eta_1,\dots,\eta_n$. Then,
$$
\frac{(\eta_0,\eta_1,\dots,\eta_n)}{(\sum_{i=0}^{n}|\eta_i|^p+Z)^{1/p}}\eqdistr(\xi_0,\xi_1,\dots,\xi_n).
$$
\end{theorem}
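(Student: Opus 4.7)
My approach for the finite $p$ case is a single polar-type change of variables that decouples a radial scale from an angular component. The guiding observation is that the joint density of the i.i.d.\ vector $(\eta_0,\dots,\eta_n,Z)$ depends on its arguments only through the combination $\sum_i|\eta_i|^p+z$, which equals $R^p$ in the notation of the theorem. Thus $R$ should carry all the exponential structure, while $\xi_i:=\eta_i/R$ should turn out to be distributed uniformly on the $l_p$-ball and independently of $R$.

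To carry this out I would first write the joint density on $\R^{n+1}\times(0,\infty)$ as
\[
c_p^{\,n+1}\exp\!\Bigl(-\sum_{i=0}^n|\eta_i|^p-z\Bigr),\qquad c_p:=\frac{1}{2\Gamma(1+1/p)},
\]
and invert the target map explicitly: on $\{(\xi_0,\dots,\xi_n)\colon\sum_i|\xi_i|^p<1\}\times(0,\infty)$ the inverse is $\eta_i=R\xi_i$, $z=R^p(1-\sum_i|\xi_i|^p)$. The central computation is the Jacobian of this inverse, whose matrix has the block shape of a scalar block $R\,I_{n+1}$ paired with a column $(\xi_0,\dots,\xi_n)^T$, a row with entries $-pR^p|\xi_i|^{p-1}\operatorname{sgn}\xi_i$, and a corner $pR^{p-1}(1-\sum_i|\xi_i|^p)$. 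Using the block formula $\det\!\left(\begin{smallmatrix}A & u\\ v^T & c\end{smallmatrix}\right)=(c-v^TA^{-1}u)\det A$, I expect the cross term $v^TA^{-1}u=-pR^{p-1}\sum_i|\xi_i|^p$ to cancel the corresponding contribution in $c$ exactly, leaving the clean factor $pR^{n+p}$. Since $\sum_i|\eta_i|^p+z=R^p$ on the image, the transported joint density becomes
\[
c_p^{\,n+1}\,p\,R^{n+p}\,e^{-R^p}\,\ind_{\{\sum_i|\xi_i|^p<1\}},
\]
a product of a constant density on the open unit $l_p$-ball and a one-dimensional density in $R$. Marginalising delivers both desired conclusions at once: $R$ is independent of $\xi=(\xi_0,\dots,\xi_n)$, and $\xi$ is uniform on $\mathbb{B}_p^{n+1}$, which is the assertion $\xi\eqdistr(\xi_0,\dots,\xi_n)$.

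The endpoint $p=\infty$ is not covered by this computation but requires no work: the $\eta_i$ are then already i.i.d.\ uniform on $[-1,1]$, so $(\eta_0,\dots,\eta_n)$ is automatically uniform on $[-1,1]^{n+1}=\mathbb{B}_\infty^{n+1}$. The only genuine obstacle I anticipate for finite $p$ is bookkeeping: the map $|\cdot|^p$ fails to be smooth at $0$ for most $p$, so the diffeomorphism argument strictly lives on the open dense set where every $\xi_i\neq 0$, and one needs to verify that the exceptional measure-zero set can be excised without affecting the densities. This is routine but should be noted.
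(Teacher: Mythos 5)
Your proof is correct. Note, however, that the paper does not prove this statement at all: it is quoted as a known theorem of Barthe, Gu\'edon, Mendelson and Naor, and the ``proof'' consists of the citation \cite{BGMN05}. What you have written is essentially a self-contained derivation of that result (the special case of the Schechtman--Zinn/BGMN representation needed here), and the computation checks out: with $A=R\,I_{n+1}$, $u=\xi$, $v^T=(-pR^p|\xi_i|^{p-1}\operatorname{sgn}\xi_i)_i$ and $c=pR^{p-1}(1-\sum_i|\xi_i|^p)$, the Schur-complement formula gives $c-v^TA^{-1}u=pR^{p-1}$ and hence the Jacobian $pR^{n+p}$, so the transported density $c_p^{\,n+1}pR^{n+p}e^{-R^p}\ind_{\{\sum_i|\xi_i|^p<1\}}$ indeed factors into a constant angular part and a radial part; as a sanity check, integrating out $R$ via $s=R^p$ yields $\Gamma(1+\tfrac{n+1}{p})$, which reproduces the volume formula \eqref{1452} with $\bw=\mathbf{1}$. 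Your two caveats are also the right ones: the lack of smoothness of $|\cdot|^p$ at the origin is handled by excising the null set where some coordinate vanishes (needed for $p\le 1$, and harmless since both the domain and image of the restricted diffeomorphism differ from the full spaces by Lebesgue-null sets), and the $p=\infty$ case is vacuous because the $\eta_i$ are then already uniform on the cube, which is how Lemma~\ref{2252} uses it. The trade-off is simply self-containedness versus brevity: your argument could replace the citation, whereas the reference \cite{BGMN05} gives the statement in greater generality (cone measures on arbitrary $l_p$-balls).
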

\begin{proof}
  See~\cite{BGMN05}.
\end{proof}
Now we are ready to prove the lemma.
\begin{proof}[Proof of Lemma~\ref{2252}]
It is easy to check that the vector $(w_0 \xi_0, w_1 \xi_1, \dots, w_n \xi_n)$ is uniformly distributed in $\mathbb{B}_{p}^{n+1}$.
Therefore, from Theorem \ref{thm-unif-ball} we obtain at once
\[
\frac{(w_0^{-1}\eta_0,w_1^{-1}\eta_1,\dots,w_n^{-1}\eta_n)}{(\sum_{i=0}^{n}|\eta_i|^p+Z)^{1/p}}\eqdistr(\xi_0,\xi_1,\dots,\xi_n).
\]
Since dividing a polynomial by a non-zero constant does not affect its zeros,  the lemma is proved.
\end{proof}

Thus it readily follows that
$$
\P[\mu_{\tilde G,k,l}(B)=m]=\P[\mu_{G,k,l}(B)=m],
$$
where the random polynomial $G$ is defined in \eqref{eq-G-ext}. Combining this with~\eqref{1430}, we arrive at
\begin{align*}
\sum_{m=0}^\infty& m\Vol_{n+1}(A_m)=\Vol_{n+1}(\mathbb{B}_{p,\bw}^{n+1})\sum_{m=0}^\infty m\,\P[\mu_{G,k,l}(B)=m]
\\&=\Vol_{n+1}(\mathbb{B}_{p,\bw}^{n+1})\, \E [\mu_{G,k,l}(B)]=\Vol_{n+1}(\mathbb{B}_{p,\bw}^{n+1})\int_B\rho_{p,\bw,k,l}(\bx,\bz)\,\dd\bx\dd\bz,
\end{align*}
where the last relation follows from the properties of correlation functions (see, e.g.,~\cite{HKPV09}). Combining this with~\eqref{245},~\eqref{1445}, and~\eqref{1446} finishes the proof of
Theorems~\ref{2235},~\ref{1224} and \ref{1558}.

\subsection{Proof of Lemma~\ref{lm-algebr}}\label{2217}
\begin{claim*}
	The boundary of $A_m$ is contained in the union of four sets:
	\begin{enumerate}
		\item the boundary of $\mathbb{B}_{p,\bw}^{n+1}$;
		\item  the set
		\[
		A' = \left\{(a_0,\dots,a_n)\in\mathbb{B}_{p,\bw}^{n+1}:\mu_{a_0+a_1x+\dots+a_nx^n,k,l}(\partial B)>0\right\};
		\]
		\item the set $D$ consisting of the points $(a_0,\dots,a_n)$ such that  the polynomial $a_0+a_1x+\dots+a_nx^n$ has double real roots;
		\item the set $D'$ consisting of the points $(a_0,\dots,a_n)$ such that  the polynomial $a_0+a_1x+\dots+a_nx^n$ has double non-real roots.
	\end{enumerate}
\end{claim*}
\begin{proof}[Proof of Claim]
Suppose that a point $(a_0,\dots,a_n)$ does not belong to any of  Sets~(i)--(iv). The task is to show that $(a_0,\dots,a_n)\not\in \tv{\partial A_m}$. Let $B'\subset\C^{k+l}$ denote a set of all $(k+l)$-tuples of different (real or complex) roots of 
\begin{align*}
    g(x):=a_0+a_1x+\dots+a_nx^n.
\end{align*}
Since $B'$ is finite and $(a_0,\dots,a_n)\not\in A'$, we have
\begin{align}\label{2101}
    \varepsilon_1:=\dist(\partial B,B')>0.
\end{align}
Denoting by $z_1,\dots,z_n$ the roots of $g$, we have that
\begin{align}\label{3}
    \varepsilon_2:=\min_{i\ne j}|z_i- z_j|>0.
\end{align}
Consider $\delta>0$ and a polynomial
\begin{align*}
    h(x):=b_0+b_1x+\dots+b_nx^n
\end{align*}
such that 
\begin{align*}
    \tv{|a_j-b_j|<\delta}\quad\text{for}\quad j=0,\dots,n.
\end{align*}
If $\delta$ is sufficiently small, then 
\begin{align}\label{1930}
    (b_0,\dots,b_n)\in\mathbb{B}_{p,\bw}^{n+1}\quad\text{if and only if}\quad(a_0,\dots,a_n)\in\mathbb{B}_{p,\bw}^{n+1},
\end{align}
\tv{which means that} $(b_0,\dots,b_n)$ does not belong to Set (i).
Let
\begin{align*}
    R:=\max_{i=1,\dots,n}|z_i|.
\end{align*}
If $z_i$ is a root of $g$, then
\begin{align*}
    |h(z_i)|=|g(z_i)-h(z_i)|\leq\sum_{j=0}^n|a_j-b_j|R^j\leq(n+1)\delta(R^n+1).
\end{align*}
On the other hand, 
\begin{align*}
    h(z_i)=b_n(z_i-z'_1)\dots(z_i-z'_n),
\end{align*}
where $z'_1,\dots,z'_n$ are the roots of $h$. Assuming that $\delta$ is so small that $|b_n|\geq |a_n|/2$, we arrive at
\begin{align*}
    \bigl|(z_i-z'_1)\dots(z_i-z'_n)\bigr|\leq2(n+1)a_n^{-1}\delta(R^n+1).
\end{align*}
Therefore if $\delta$ is sufficiently small, then there exists an index $j$ such that
\begin{align}\label{2038}
    |z_i-z'_j|<\frac{\varepsilon_2} 2\quad\text{and}\quad  |z_i-z'_j|<\frac{\varepsilon_1}{\sqrt{k+l}}.
\end{align}
Thus, taking into account~\eqref{3}, we obtain that the roots of $g$ and $h$ split into $n$ pairs all satisfying~\eqref{2038} and the roots $z'_1,\dots,z'_n$ are pairwise different. Now consider some indices $1\leq i_1<\dots<i_{k+l}\leq n$. It follows from the second part of~\eqref{2038} that
\begin{align*}
    \bigl\|(z_{i_1},\dots,z_{i_{k+l}})-(z'_{i_1},\dots,z'_{i_{k+l}})\bigr\|<\varepsilon_1,
\end{align*}
which together with~\eqref{2101} implies that $(z'_{i_1},\dots,z'_{i_{k+l}})\not\in\partial B$
and
\begin{align*}
    (z'_{i_1},\dots,z'_{i_{k+l}})\in B\quad\text{if and only if}\quad \tv{(z_{i_1},\dots,z_{i_{k+l}})}\in B.
\end{align*}
This together with~\eqref{1930} means that all \tv{points} $(b_0,\dots,b_n)$ from some neighborhood of $(a_0,\dots,a_n)$  satisfy
\begin{align*}
    (b_0,\dots,b_n)\in \tv{A_m}\quad\text{if and only if}\quad(a_0,\dots,a_n)\in \tv{A_m},
\end{align*}
so $(a_0,\dots,a_n)\not\in\tv{\partial A_m}$, and the claim follows.
\end{proof}	
\par
It follows from the claim that in \tv{order} to prove Lemma~\ref{lm-algebr} it is enough to show that Sets~(i)--(iv) are of Lipschitz class.

	{\bf (i) The boundary of $\mathbb{B}_{p,\bw}^{n+1}$.}	 {For $p\ge 1$ the set $\mathbb{B}_{p,\bw}^{n+1}$ is a convex body. Therefore, according to~\cite[Theorem~2.6]{mW12} its boundary $\partial\mathbb{B}_{p,\bw}^{n+1}$ belongs to the Lipschitz class $(1,L_0)$, where $L_0$ depends  on $n,p,\bw$ only.
In the case $0<p<1$ the set $\partial\mathbb{B}_{p,\bw}^{n+1}$ consists of $2^{n+1}$ congruent concave pieces.
Every  piece can be embedded in the boundary of a convex body congruent to
\[
\left\{(a_0,\dots,a_n)\in \mathbb{B}_{1,\bw}^{n+1}\setminus \mathbb{B}_{p,\bw}^{n+1}: \min_{0\le j\le n} a_j \ge 0\right\}.
\]
Applying~\cite[Theorem~2.6]{mW12} to the latter set we see that $\partial\mathbb{B}_{p,\bw}^{n+1}$ is of Lipschitz class $(2^{n+1},L_0)$ for $0<p<1$.}
	
	{\bf (ii) The set $A'$.} 
	Consider binary multi-indices $\epsilon\in\{-1,1\}^k,\delta\in\{-1,1\}^l$. 	We have
	\[
	A'\subset\bigcup_{(\epsilon,\delta)\in\{-1,1\}^{k+l}} A'_{\epsilon,\delta},
	\]
where $A'_{\epsilon,\delta}$ is a set of points $(a_0,\dots,a_n)\in\mathbb{B}_{p,\bw}^{n+1}$ such that there exists a $(k+l)$-tuple
	\begin{equation}\label{1405}
	(x_1,\dots,x_k,z_1,\dots,z_{l})\in\partial B
	\end{equation} 
	of distinct zeros of $a_0+a_1x+\dots+a_nx^n$ with
	\begin{equation}\label{1436}
	\begin{cases}
	|x_i|\leq1, & \epsilon_i=1,\\
	|x_i|>1, & \epsilon_i=-1
	\end{cases}
	\quad\mathrm{and}
	\quad
	\begin{cases}
	|z_j|\leq1, & \delta_j=1,\\
	|z_j|>1, & \delta_j=-1
	\end{cases}
	\end{equation}
	for $i=1,\dots,k,j=1,\dots,l$.
	
	Let us fix some $(\epsilon,\delta)\in\{-1,1\}^{k+l}$, $(a_0,\dots,a_n)\in A'_{\epsilon,\delta}$, and the corresponding $(k+l)$-tuple~\eqref{1405}.
	
	Since the components of~\eqref{1405} are different roots of $a_0+a_1x+\dots+a_nx^n$, there exists a unique polynomial with real coefficients  $\sum_{r=0}^{n-k-2l} b_r x^r$ such that
	\begin{align}\label{eq-B-param}
	\sum_{i=0}^n a_i x^i=\sum_{r=0}^{n-k-2l} b_r x^r
	&\prod_{i:\epsilon_i=1} (x-x_i) \prod_{i:\epsilon_i=-1}(x_i^{-1}x-1)\\
	\times&\prod_{j:\delta_j=1} (x-z_j)(x-\bar{z}_j)  \prod_{j:\delta_j=-1}(z_j^{-1}x-1)(\bar{z}_j^{-1}x-1). \notag
	\end{align}
	Thus,
	\begin{equation}\label{1452b}
	a_i=p_i(x_1^{\epsilon_1},\dots,x_k^{\epsilon_k},z_1^{\delta_1},\dots,z_{l}^{\delta_l},\bar z_1^{\delta_1},\dots,\bar z_{l}^{\delta_l},b_0,\dots,b_{n-k-2l})
	\end{equation}
	for some polynomials $p_0,\dots,p_n$.

	Since $\partial B$ is of Lipschitz class, there exists a Lipschitz map 
	\[
	\phi=(\phi_1,\dots,\phi_{k+l}) : [0, 1]^{k+2l-1} \to\R^k\times \C_+^l
	\]
	(from a fixed finite collection of Lipschitz maps depending on $B$ only) such that 
	\[
	(x_1,\dots,x_k,z_1,\dots,z_{l})\in\phi( [0, 1]^{k+2l-1}).
	\]
	Thus for some $\bt_0\in  [0, 1]^{k+2l-1}$ and $i=1,\dots,k$, $j=1,\dots,l$ we have
	\begin{equation}\label{1452}
	x_i=\phi_i(\bt_0), \qquad z_j=\phi_{k+j}(\bt_0).
	\end{equation}
	For $i=1,\dots,k$, $j=1,\dots,l$, let
	\[
	\varphi_{i,\epsilon_i}:=
	\begin{cases}
	\phi_i, & \epsilon_i=1,\\
	\frac{\phi_i}{\max(1,|\phi_i|^2)}, & \epsilon_i=-1,
	\end{cases}
	\quad\text{and}\quad
	\varphi_{k+j,\delta_j}:=
	\begin{cases}
	\phi_{k+j}, & \delta_j=1,\\
	\frac{\phi_{k+j}}{\max(1,|\phi_{k+j}|^2)}, & \delta_j=-1.
	\end{cases}
	\]
	Obviously, $\varphi_{i,\epsilon_i}$, $\bar\varphi_{i,\epsilon_i}$, $\varphi_{k+j,\delta_j}$, $\bar\varphi_{k+j,\delta_j}$ are Lipschitz, too. Taking into account~\eqref{1436},~~\eqref{1452b}, and~\eqref{1452}, we have
%	\begin{equation}\label{1504}
%	a_i=p_i(\varphi_{1,\epsilon_1}(t_0),\dots,\varphi_{k+l,\delta_l}(t_0),\bar\varphi_{k+1,\delta_1}(t_0),\dots,\bar\varphi_{k+l,\delta_l}(t_0),b_0,\dots,b_{n-k-2l}).
%	\end{equation}
	\begin{align}\label{1504}
	a_i=p_i(\varphi_{1,\epsilon_1}(\bt_0),\dots,\varphi_{k,\epsilon_k}(\bt_0),\, & \varphi_{k+1,\delta_1}(\bt_0),\dots, \varphi_{k+l,\delta_l}(\bt_0),\\
	&\bar\varphi_{k+1,\delta_1}(\bt_0),\dots,\bar\varphi_{k+l,\delta_l}(\bt_0),
	b_0,\dots,b_{n-k-2l}). \notag
	\end{align}
	Let us assume for the moment that there exists a constant $C>0$ depending on $B$ only such that
	\begin{equation}\label{1526}
	|b_0|,|b_1|,\dots, |b_{n-k-2l}|\leq C.
	\end{equation}
	Letting
	\[
\tv{	\tilde b_i:=\frac{b_i+C}{2C}}
	\]
	and  redefining the polynomials $p_0,\dots,p_n$ accordingly, we obtain
%	\[
%	a_i=\tilde p_i(\varphi_{1,\epsilon_1}(t_0),\dots,\varphi_{k+l,\delta_l}(t_0),\bar\varphi_{k+1,\delta_1}(t_0),\dots,\bar\varphi_{k+l,\delta_l}(t_0),\tilde b_0,\dots,\tilde b_{n-k-2l}).
%	\]
	\begin{align*}
	a_i=\tilde p_i(\varphi_{1,\epsilon_1}(\bt_0),\dots,\varphi_{k,\epsilon_k}(\bt_0),\, & \varphi_{k+1,\delta_1}(\bt_0),\dots, \varphi_{k+l,\delta_l}(\bt_0),\\
	&\bar\varphi_{k+1,\delta_1}(\bt_0),\dots,\bar\varphi_{k+l,\delta_l}(\bt_0),
	\tilde b_0,\dots,\tilde b_{n-k-2l}),
	\end{align*}
	where
	\[
	(\bt_0,\tilde b_0,\dots,\tilde b_{n-k-2l})\in[0,1]^n.
	\]
	Since polynomials are Lipschitz on compact sets and the composition of Lipschitz functions is Lipschitz, assuming~\eqref{1526} it follows from~\eqref{1504} that $A'$ is of Lipschitz class. Now let us show that~\eqref{1526} holds. To this end, we first recall the definition of the Mahler measure and its basic properties.
	
	For a polynomial $q(z)=a_n(z-z_1)\dots(z-z_n)$ its Mahler measure is defined as
	\[
	M[q]:=|a_n|\prod_{i=1}^{n}\max(1,|z_i|).
	\]
	It is known (see, e.g.,~\cite[Theorem~4.2.1]{vP04}) that the naive height can be estimated via the Mahler measure as follows:
	\begin{equation}\label{1810}
	\frac{M[q]}{\sqrt{n+1}}\leq H[q]\leq 2^{n-1}M[q].
	\end{equation}
	Also, it is easily seen that for polynomials $q_1,q_2$ one has $M[q_1q_2]=M[q_1]M[q_2]$. Thus the polynomials $\sum_{i=0}^n a_i x^i$ and $\sum_{r=0}^{n-k-2l} b_r x^r$ (see~\eqref{eq-B-param}) have the same Mahler measure. We have
	\begin{align*}
	H\!\left[\sum\nolimits_{r=0}^{n-k-2l} b_r x^r\right]&\leq 2^{n-k-2l-1}M\!\left[\sum\nolimits_{r=0}^{n-k-2l} b_r x^r\right]=2^{n-k-2l-1}M\!\left[\sum\nolimits_{i=0}^n a_i x^i\right]\\
	&\leq2^{n-k-2l-1}\sqrt{n+1}\,H\!\left[\sum\nolimits_{i=0}^n a_i x^i\right]\leq2^{n-k-2l-1}\sqrt{n+1},
	\end{align*}
	and~\eqref{1526} follows.
	
	{\bf (iii) The set $D$.}
	\tc{Let
	\begin{align*}
	    W:=[-1,1]\times\big[-\max_{j=0,\dots,n} w_j^{-1}, \max_{j=0,\dots,n} w_j^{-1}\big]^{n-1}.
	\end{align*}}
	Consider \tc{a map} 
	\[
	\phi=(\phi_0,\dots,\phi_{n}) : \tc{W} \to\R^{n+1}
	\]
	defined as 
	\begin{align*}
	\phi_0(\tc{x},t_2,\dots,t_{n}) &= \sum_{j=2}^{n} (j-1) \tc{t_jx}^{j}, \quad
	\phi_1(\tc{x},t_2,\dots,t_{n}) = - \sum_{j=2}^{n} j \tc{t_jx}^{j-1}, \\
	\phi_j(\tc{x},t_2,\dots,t_{n}) &= \tv{t_j},\quad 2\le j\le n.
	\end{align*}
	Since $\phi$ \tc{is} continuously differentiable in a compact, \tc{it satisfies} \tb{a} Lipschitz condition with some constant  which depends  on $n$ and \tc{$\max_{j=0,\dots,n}w_j^{-1}$} only.
	
	Now suppose that $(a_0,\dots,a_n)\in D$. Then $a_0+a_1x+\dots+a_nx^n$ has a multiple real root, say $x_0$, which implies
	\begin{equation}\label{1441}
	\sum_{j=0}^n a_j x_0^j = 0, \qquad
	\sum_{j=0}^n ja_j x_0^{j-1} = 0,
	\end{equation}
	or, equivalently,
	\begin{equation*}
	a_0 = \sum_{j=2}^n (j-1) a_jx_0^j, \qquad
	a_1 = - \sum_{j=2}^n j a_jx_0^{j-1}.
	\end{equation*}
	Using these equations and definition of $\phi$ we arrive at
	\begin{equation}\label{1457}
	a_j=\phi_j(x_0,\tc{a_2},\dots,\tc{a_n}),\quad 0\le j\le n.
	\end{equation}
	\tc{
	Moreover, it is straightforward that if $x_0\ne0$, then $1/x_0$ is a multiple root of the reflected polynomial $a_n+a_{n-1}x+\dots+a_0x^n$, \tb{hence}  applying the above reasoning gives
	\begin{equation*}
	    a_{n-j}=\phi_j(x_0^{-1},a_{n-2},\dots,a_{0}),\quad 0\le j\le n,
	\end{equation*}
	or, equivalently,
	\begin{equation}\label{1458}
	    a_{j}=\phi_{n-j}(x_0^{-1},a_{n-2},\dots,a_{0}),\quad 0\le j\le n.
	\end{equation}
	}

	Since $(a_0,\dots,a_n)\in\mathbb{B}_{p,\bw}^{n+1}\subseteq \mathbb{B}_{\infty,\bw}^{n+1}$, we have that for $|x_0|\leq1$ the argument of the right-hand side of~\eqref{1457} belongs to \tc{$W$}, while  for $|x_0|\geq1$ the argument of the right-hand side of~\eqref{1458} belongs to \tc{$W$}, \tc{\tb{hence}  we have  $(a_0,\dots,a_n)\in\phi( W)\cup\tc{\phi'}( W)$, where $\phi':=(\phi_n,\dots,\phi_0)$. Thus it follows from Remark~\ref{2006} }that $D$ is of  Lipschitz class.

	\emph{\bf (iv) The set $D'$.}	Let
	\begin{align*}
	    W:=[0,\pi]\times[0,1]\times\big[-\max_{j=0,\dots,n} w_j^{-1}, \max_{j=0,\dots,n} w_j^{-1}\big]^{n-2}.
	\end{align*}
	Consider a map
	\[
	\phi=(\phi_0,\dots,\phi_{n}) : W \to\R^{n+1}
	\]
	defined as 
	\begin{align*}
	\phi_0(\alpha,r,t_3,\dots,t_{n}) &= \sum_{j=3}^n t_jr^{j}\left(-\frac j2\frac{\sin[ (j-1)\alpha]}{\sin\alpha}+ j\cos\alpha\cos[(j-1)\alpha]
	-\cos\alpha\right),
	\\\phi_1(\alpha,r,t_3,\dots,t_{n})& =\sum_{j=3}^n j t_jr^{j}\left( \cos\alpha\frac{\sin [(j-1)\alpha]}{\sin\alpha}- \cos [(j-1)\alpha]\right), 
	\\\phi_2(\alpha,r,t_3,\dots,t_{n})& =-\sum_{j=3}^n jt_jr^{j}\frac{\sin[ (j-1)\alpha]}{\sin[2\alpha]}, 
	\\	\phi_j(\alpha,r,t_3,\dots,t_{n})& = t_j,\quad 3\le j\le n.
	\end{align*} 
	Again, $\phi$ is continuously differentiable in a compact, so it satisfies the Lipschitz condition with some constant  which depends  on $n$ and $\max_{j=0,\dots,n}w_j^{-1}$ only.
	
	Now suppose that $(a_0,\dots,a_n)\in D'$. Then $a_0+a_1x+\dots+a_nx^n$ has a multiple non-real root, say $z_0=r_0(\cos\alpha_0+\mathbf{i} \sin\alpha_0)$, where $r_0>0, \alpha_0\in(0,\pi)$, which implies
	\begin{equation*}
	\sum_{j=0}^n a_j z_0^j = 0, \qquad
	\sum_{j=1}^n ja_j z_0^{j-1} = 0,
	\end{equation*}
	or, equivalently,
	\begin{align}\label{2247}
	    \sum_{j=0}^n a_j r_0^j\cos [j\alpha_0] = 0, \qquad	&\sum_{j=1}^n ja_j r_0^{j-1}\cos[(j-1)\alpha_0] = 0,
	    \\\sum_{j=1}^n a_j r_0^j\sin[j\alpha_0] = 0, \qquad	&\sum_{j=2}^n ja_j r_0^{j-1}\sin[(j-1)\alpha_0] = 0.\notag
	\end{align}
	It consistently follows from the fourth,  second, and first equalities in~\eqref{2247} that
	\begin{align*}
	    a_2&=-\frac12\sum_{j=3}^n ja_j r_0^{j-2}\frac{\sin [(j-1)\alpha_0]}{\sin\alpha_0},
	    \\a_1&=-2a_2r_0\cos[\alpha_0]-\sum_{j=3}^n ja_j r_0^{j-1}\cos [(j-1)\alpha_0]
	    \\&=\sum_{j=3}^n ja_j r_0^{j-1}\cos[\alpha_0]\frac{\sin [(j-1)\alpha_0]}{\sin\alpha_0}-\sum_{j=3}^n ja_j r_0^{j-1}\cos [(j-1)\alpha_0],
	    \\a_0&=-a_1r_0\cos\alpha_0-a_2r_0^2\cos[2\alpha_0]-\sum_{j=3}^n a_j r_0^j\cos [j\alpha_0]
	    \\&=-\sum_{j=3}^n ja_j r_0^{j}\cos^2[\alpha_0]\frac{\sin [(j-1)\alpha_0]}{\sin\alpha_0}+\sum_{j=3}^n ja_j r_0^{j}\cos\alpha_0\cos[(j-1)\alpha_0]
	    \\&\;\;\;\;+\frac12\sum_{j=3}^n ja_j r_0^{j}\cos[2\alpha_0]\frac{\sin [(j-1)\alpha_0]}{\sin\alpha_0}-\sum_{j=3}^n a_j r_0^j\cos [j\alpha_0]
	    \\&=-\frac12\sum_{j=3}^n ja_j r_0^{j}\frac{\sin [(j-1)\alpha_0]}{\sin\alpha_0}+\sum_{j=3}^n ja_j r_0^{j}\cos\alpha_0\cos [(j-1)\alpha_0]-\sum_{j=3}^n a_j r_0^j\cos [j\alpha_0],
	\end{align*}
	where in the last equation we used the identity $\cos[2\alpha_0]=2\cos^2\alpha_0-1$.
	Using these equations and the definition of $\phi$ we arrive at
	\begin{equation}\label{12}
	a_j=\phi_j(\alpha_0,r_0,a_3,\dots,a_n),\quad 0\le j\le n.
	\end{equation}
    Again, $1/\conj{z_0}$ is a multiple root of the reflected polynomial $a_n+a_{n-1}x+\dots+a_0x^n$, so  applying the above reasoning gives
    \begin{equation*}
	a_{n-j}=\phi_j(\alpha_0,r_0,a_{n-3},\dots,a_0),\quad 0\le j\le n,
	\end{equation*}
	or, equivalently,
	\begin{equation}\label{8}
	a_{j}=\phi_{n-j}(\alpha_0,r_0,a_{n-3},\dots,a_0),\quad 0\le j\le n.
	\end{equation}
	Since $(a_0,\dots,a_n)\in\mathbb{B}_{p,\bw}^{n+1}\subseteq \mathbb{B}_{\infty,\bw}^{n+1}$, we have that for $r_0\leq1$ the argument of the right-hand side of~\eqref{12} belongs to $W$, while  for $r_0\geq1$ the argument of the right-hand side of~\eqref{8} belongs to $W$, so we have  $(a_0,\dots,a_n)\in\phi( W)\cup\phi'( W)$, where $\phi':=(\phi_n,\dots,\phi_0)$. Thus it follows from Remark~\ref{2006} that $D'$ is of  Lipschitz class.

\section{Proof of Theorem~\ref{thm-corr-rel}}\label{1245}\label{sc-proof-rho}

\subsection{Preliminaries}

Suppose that $x_1,\dots,x_k\in\R$ and $z_1,\dots,z_l\in\C_+$ are different zeros of the random polynomial $G$ defined in \eqref{1914}.
It means that
\begin{equation}\label{2018}
\begin{pmatrix}
1 & x_1 & \dots & x_1^{n} \\
\vdots & \vdots & \ddots & \vdots \\
1 & x_k & \dots & x_k^{n} \\
1 & \Re z_1 & \dots & \Re z_1^{n} \\
0 & \Im z_1 & \dots & \Im z_1^{n} \\
\vdots & \vdots & \ddots & \vdots \\
1 & \Re z_l & \dots & \Re z_l^{n} \\
0 & \Im z_l & \dots & \Im z_l^{n} \\
\end{pmatrix}
\begin{pmatrix}
\xi_0\\
\vdots\\
\xi_n
\end{pmatrix}
=\mathbf{0}.
\end{equation}

Denote by $V(\bx,\bz)$ {the real Vandermonde type} matrix
$$
V(\bx,\bz):=
\begin{pmatrix}
1 & x_1 & \dots & x_1^{k+2l-1} \\
\vdots & \vdots & \ddots & \vdots \\
1 & x_k & \dots & x_k^{k+2l-1} \\
1 & \Re z_1 & \dots & \Re z_1^{k+2l-1} \\
0 & \Im z_1 & \dots & \Im z_1^{k+2l-1} \\
\vdots & \vdots & \ddots & \vdots \\
1 & \Re z_l & \dots & \Re z_l^{k+2l-1} \\
0 & \Im z_l & \dots & \Im z_l^{k+2l-1} \\
\end{pmatrix}.
$$
Then,~\eqref{2018} is equivalent to
\begin{equation}\label{1400}
\begin{pmatrix}
\sum_{j=k+2l}^n\xi_jx_1^j\\
\vdots\\
\sum_{j=k+2l}^n\xi_jx_k^j\\
\Re\sum_{j=k+2l}^n\xi_jz_1^j\\
\Im\sum_{j=k+2l}^n\xi_jz_1^j\\
\vdots\\
\Re\sum_{j=k+2l}^n\xi_jz_l^j\\
\Im\sum_{j=k+2l}^n\xi_jz_l^j\\
\end{pmatrix}
=-V(\bx,\bz)
\begin{pmatrix}
\xi_{0}\\
\vdots\\
\xi_{k+2l-1}
\end{pmatrix}.
\end{equation}
It is easy to check that $V(\bx,\bz)$ satisfies
\begin{equation}\label{1931}
|\det V(\bx,\bz)|=2^{-l}\rv_{k+2l}(\bx,\bz,\bar\bz),
\end{equation}
where $\rv_{k+2l}$ is defined in~\eqref{1330}.

Consider a random function $\be=(\eta_0,\dots,\eta_{k+2l-1})^T:\R^{k}\times\C^l\to\R^{k+2l}$ defined as
\begin{equation}\label{2327}
  \be(\bx,\bz):=
  -V^{-1}(\bx,\bz)
\begin{pmatrix}
\sum_{j=k+2l}^n\xi_jx_1^j\\
\vdots\\
\sum_{j=k+2l}^n\xi_jx_k^j\\
\Re\sum_{j=k+2l}^n\xi_jz_1^j\\
\Im\sum_{j=k+2l}^n\xi_jz_1^j\\
\vdots\\
\Re\sum_{j=k+2l}^n\xi_jz_l^j\\
\Im\sum_{j=k+2l}^n\xi_jz_l^j\\
\end{pmatrix}.
\end{equation}
%\begin{equation}\label{2327}
%  \mathbf{\eta}(\bx,\bz)=
%  -V^{-1}(\bx,\bz)
%\begin{pmatrix}
%\sum_{j=k+2l}^n\xi_jx_1^j\\
%\vdots\\
%\sum_{j=k+2l}^n\xi_jx_k^j\\
%\sum_{j=k+2l}^n\xi_jz_1^j\\
%\vdots\\
%\sum_{j=k+2l}^n\xi_jz_l^j\\
%\end{pmatrix}.
%\end{equation}
It follows from~\eqref{1400} that~\eqref{2018} is equivalent to
\begin{equation}\label{1404}
\be(\bx,\bz)=
 \begin{pmatrix}
\xi_0\\
\vdots\\
\xi_{k+2l-1}
\end{pmatrix}.
\end{equation}

Consider a random function $\varphi:\R^k\times\C^{l}\to\R$ defined as
\begin{equation}\label{eq-phi-def}
\begin{aligned}
\varphi(\bx,\bz):=\frac1{\rv_{k+2l}(\bx,\bz)}&\prod_{i=1}^{k}\Bigg|\sum_{j=0}^{k+2l-1}j\eta_j(\bx,\bz)x_i^{j-1} + \sum_{j=k+2l}^{n}j\xi_j x_i^{j-1}\Bigg|\\
&\times\prod_{i=1}^{l}\Bigg|\sum_{j=0}^{k+2l-1}j\eta_j(\bx,\bz)z_i^{j-1} + \sum_{j=k+2l}^{n}j\xi_j z_i^{j-1}\Bigg|^2.
\end{aligned}
\end{equation}

\begin{lemma}
%  Let $m=k+2l$.
  For all $(\bx,\bz)\in\R^k\times\C^{l}$,
\begin{equation}\label{1835}
 \E\left[\varphi(\bx,\bz)\prod_{i=0}^{k+2l-1}f_i(\eta_i(\bx,\bz))\right] =  \rho_{k+2l}(\bx,\bz,\bar\bz).
\end{equation}
\end{lemma}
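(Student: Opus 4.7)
The strategy is to perform a change of variables that transforms the expectation into the integral defining $\rho_{k+2l}(\bx,\bz,\bar\bz)$ in~\eqref{eq-rho-b}. Fix $(\bx,\bz)\in\R^k\times\C_+^l$ with all coordinates distinct, set $m:=k+2l$, and let $P(z):=\prod_{i=1}^k(z-x_i)\prod_{i=1}^l(z-z_i)(z-\bar z_i)$, a real monic polynomial of degree $m$ with coefficients $p_s=(-1)^{m-s}\sigma_{m-s}(\bx,\bz,\bar\bz)$. Observe that both $\varphi$ and each $\eta_i(\bx,\bz)$ depend only on the ``top'' coefficients $\xi_m,\dots,\xi_n$ (the lower ones are purged by $V^{-1}$ in~\eqref{2327}), so the expectation unfolds as
$$
\int_{\R^{n-m+1}}\varphi(\bx,\bz)\prod_{i=0}^{m-1}f_i(\eta_i(\bx,\bz))\prod_{j=m}^{n}f_j(\xi_j)\,\dd\xi_m\cdots \dd\xi_n.
$$

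Next, I substitute $(\xi_m,\dots,\xi_n)$ by the coefficients $(t_0,\dots,t_{n-m})$ of the quotient polynomial $R(z):=G(z)/P(z)=\sum_j t_j z^j$, where $G$ is the polynomial with lower coefficients $\eta_i(\bx,\bz)$ and upper coefficients $\xi_j$. By construction of the $\eta_i$'s through~\eqref{2327}/\eqref{1404}, this $G$ has $P$ as a factor, so $R$ is well-defined. Expanding $G=PR$ coefficient-wise gives $\xi_i=\sum_{j}p_{i-j}t_j$; since $p_m=1$ and $p_s=0$ for $s>m$, the Jacobian matrix of the map $(t_0,\dots,t_{n-m})\mapsto(\xi_m,\dots,\xi_n)$ is upper triangular with unit diagonal, so $|\dd\xi_m\cdots \dd\xi_n|=|\dd t_0\cdots \dd t_{n-m}|$. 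Under the same substitution the \emph{lower} coefficients $\eta_i(\bx,\bz)$ become the remaining coefficients of $G=PR$, namely
$$
\eta_i=\sum_{j=0}^{n-m}(-1)^{m-i+j}\sigma_{m-i+j}(\bx,\bz,\bar\bz)\,t_j,\qquad i=0,\dots,m-1,
$$
which is exactly the argument of $f_i$ in~\eqref{eq-rho-b}; the same formula extends to $i=m,\dots,n$ for $\xi_i$.

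It remains to match $\varphi$ with the Vandermonde-times-product factor in $\rho_m$. The sums inside the absolute values in~\eqref{eq-phi-def} are simply $G'(x_i)$ and $G'(z_i)$. Because $P$ vanishes at every root $y\in\{x_i,z_i,\bar z_i\}$, one has $G'(y)=P'(y)R(y)$, and $|P'(y)|=\prod_{y'\neq y}|y-y'|$ yields $\prod_{\text{all roots of }P}|P'(y)|=\rv_m(\bx,\bz,\bar\bz)^2$. The real-coefficient identity $|R(\bar z_i)|=|R(z_i)|$ then gives
$$
\prod_{i=1}^k|G'(x_i)|\prod_{i=1}^l|G'(z_i)|^2=\rv_m(\bx,\bz,\bar\bz)^2\prod_{i=1}^k|R(x_i)|\prod_{i=1}^l|R(z_i)|^2.
$$
Dividing by the $\rv_m$ prefactor of $\varphi$ leaves $\rv_m(\bx,\bz,\bar\bz)\prod_i|R(x_i)|\prod_i|R(z_i)|^2$, which is precisely the factor appearing in~\eqref{eq-rho-b} with $m=k+2l$ and arguments $(\bx,\bz,\bar\bz)$. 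Combining the three pieces --- transformed product of $f_i$'s, unit Jacobian, and rewritten $\varphi$ --- reproduces the definition of $\rho_{k+2l}(\bx,\bz,\bar\bz)$ verbatim.

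The only step requiring mild care is verifying that the $\eta_i$'s defined via $V^{-1}(\bx,\bz)$ coincide with the lower coefficients of $G=PR$; but this is forced by the uniqueness of polynomial division once one knows $P\mid G$, which is precisely what the linear system~\eqref{2018} encoded by $V(\bx,\bz)$ asserts. Everything else is routine bookkeeping, saved from any analytic subtlety by the triangular structure of the Jacobian.
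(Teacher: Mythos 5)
Your proposal is correct and follows essentially the same route as the paper: unfold the expectation over the top coefficients, substitute the coefficients of the quotient $R=G/P$ (a triangular, unit-Jacobian change of variables), and use the factorization $G=PR$ together with $G'(y)=P'(y)R(y)$ at the roots of $P$ to turn $\varphi$ into the Vandermonde-times-$\prod|R|$ factor of \eqref{eq-rho-b}. Your identity $\prod_{y}|P'(y)|=\rv_m(\bx,\bz,\bar\bz)^2$ is just a compact packaging of the explicit product formulas \eqref{1827} in the paper, and your reading of the prefactor of $\varphi$ as $\rv_{k+2l}(\bx,\bz,\bar\bz)^{-1}$ matches the intended meaning of \eqref{eq-phi-def}.
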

\begin{proof}
The idea of the proof goes back to~\cite[pp. 58--59]{dK12-Trudy} (see also~\cite[Lemmas~2.5, 2.6]{dK14}).

By definition of the expected value,
\begin{align}\label{1850}
\E\left[\varphi(\bx,\bz)\prod_{i=0}^{k+2l-1}f_i(\eta_i(\bx,\bz))\right]=&\frac1{\rv_{k+2l}(\bx,\bz)}\\
\times&\int_{\R^{n-k-2l+1}}\prod_{i=1}^{k}\Bigg|\sum_{j=0}^{k+2l-1}j r_j(\bx,\bz,\bs)x_i^{j-1} + \sum_{j=k+2l}^{n}j s_j x_i^{j-1}\Bigg|\notag\\
\times&\prod_{i=1}^{l}\Bigg|\sum_{j=0}^{k+2l-1}j r_j(\bx,\bz,\bs)z_i^{j-1} + \sum_{j=k+2l}^{n}j s_j z_i^{j-1}\Bigg|^2\notag\\
\times&\prod_{i=0}^{k+2l-1}f_i(r_i(\bx,\bz,\bs))\prod_{i=k+2l}^{n}f_i(s_i)\dd s_{k+2l}\dots \dd s_n,\notag
\end{align}
where the functions $r_0,\dots,r_{k+2l-1}$ are defined by
\begin{equation}\label{1800}
\begin{pmatrix}
r_0(\bx,\bz,\bs)\\
\vdots\\
r_{k+2l-1}(\bx,\bz,\bs)\\
\end{pmatrix}
:=
  -V^{-1}(\bx,\bz)
\begin{pmatrix}
\sum_{j=k+2l}^ns_jx_1^j\\
\vdots\\
\sum_{j=k+2l}^ns_jx_k^j\\
\Re\sum_{j=k+2l}^ns_jz_1^j\\
\Im\sum_{j=k+2l}^ns_jz_1^j\\
\vdots\\
\Re\sum_{j=k+2l}^ns_jz_l^j\\
\Im\sum_{j=k+2l}^ns_jz_l^j\\
\end{pmatrix}
\end{equation}
and $\bs:=(s_{k+2l},\dots,s_n)$.

Now we {perform} the following change of variables:
\begin{equation}\label{1819}
s_i=\sum_{j=0}^{n-k-2l}(-1)^{k+2l-i+j}\sigma_{k+2l-i+j}(\bx,\bz,\bar{\bz})t_j,\quad i=k+2l,\dots,n.
\end{equation}
where $\sigma_i$'s are defined in~\eqref{1738} and we write $\sigma_i:=0$ for $i<0$. The Jacobian is a lower triangle matrix with {ones on} the diagonal, hence the determinant is 1. {This variable change is suggested by the fact that}
$x_1,\dots,x_k,z_1,\bar{z_1}\dots,z_l,\bar{z_l}$ are zeros of the polynomial
$$
g(z):=r_0+r_1z+\dots+r_{k+2l-1}z^{k+2l-1}+s_{k+2l}z^{k+2l}+\dots+s_nz^n,
$$
see~\eqref{1800}. Thus for some $t'_0,\dots,t'_{n-k-2l}$ we have
\begin{align}\label{1817}
g(z)&=\prod_{j=1}^{k}(z-x_j)\prod_{j=1}^{l}(z-z_j)(z-\bar{z_j})\left(\sum_{j=0}^{n-k-2l} t'_j z^j \right)\\
&=\left(\sum_{j=0}^{k+2l} (-1)^{k+2l-j} \sigma_{k+2l-j}(\bx,\bz,\bar{\bz}) z^j\right)\left(\sum_{j=0}^{n-k-2l} t'_j z^j \right).\notag
\end{align}
Comparing the coefficients of the polynomials from the left-hand  and right-hand sides and recalling~\eqref{1819} we obtain that $t'_i=t_i$ for $i=0,\dots,n-k-2l$ and
\begin{equation}\label{1846}
r_i(\bx,\bz,\bs)=\sum_{j=0}^{n-k-2l}(-1)^{k+2l-i+j}\sigma_{k+2l-i+j}(\bx,\bz,\bar{\bz})t_j,\quad i=0,\dots,k+2l-1.
\end{equation}
If we differentiate the first equation in~\eqref{1817} at points $x_i,z_i$, and $\bar{z_i}$, we get
\begin{align}\label{1827}
  \sum_{j=0}^{k+2l-1}j r_jx_i^{j-1} + \sum_{j=k+2l}^{n}j s_j x_i^{j-1} &= \prod_{j\ne i}(x_i-x_j)\prod_{j=1}^{l}(x_i-z_j)(x_i-\bar{z_j})\left(\sum_{j=0}^{n-k-2l} t'_j x_i^j \right), \\
\sum_{j=0}^{k+2l-1}j r_jz_i^{j-1} + \sum_{j=k+2l}^{n}j s_j z_i^{j-1} &= \prod_{j=1}^{k}(z_i-x_j)\prod_{j\ne i}(z_i-z_j)(z_i-\bar{z_j})\left(\sum_{j=0}^{n-k-2l} t'_j z_i^j \right),\notag \\
\sum_{j=0}^{k+2l-1}j r_j\bar{z_i}^{j-1} + \sum_{j=k+2l}^{n}j s_j \bar{z_i}^{j-1} &= \prod_{j=1}^{k}(\bar{z_i}-x_j)\prod_{j\ne i}(\bar{z_i}-z_j)(\bar{z_i}-\bar{z_j})\left(\sum_{j=0}^{n-k-2l} t'_j \bar{z_i}^j \right).\notag
\end{align}
Substituting~\eqref{1819}, \eqref{1846}, and~\eqref{1827} in~\eqref{1850} completes the proof of the lemma.

\end{proof}

Denote by $J_{\be}(\bx,\bz)$ the \emph{real} Jacobian matrix of $\be$ at point  $(\bx,\bz)$:
$$
J_{\be}=
\begin{pmatrix}
\frac{\partial \eta_0 }{\partial x_1} & \dots & \frac{\partial \eta_0 }{\partial x_k} & \frac{\partial \eta_0 }{\partial \Re z_1} &  \frac{\partial \eta_0 }{\partial \Im z_1}&\dots & \frac{\partial \eta_0 }{\partial \Re
z_l}& \frac{\partial \eta_0 }{\partial \Im z_l}\\
\vdots & \ddots & \vdots & \vdots &  \vdots&\ddots & \vdots&\vdots\\
\frac{\partial \eta_{k+2l-1} }{\partial x_1} & \dots & \frac{\partial \eta_{k+2l-1} }{\partial x_k} & \frac{\partial \eta_{k+2l-1} }{\partial \Re z_1} &  \frac{\partial \eta_{k+2l-1} }{\partial \Im z_1}&\dots &
\frac{\partial \eta_{k+2l-1} }{\partial \Re z_l}& \frac{\partial \eta_{k+2l-1} }{\partial \Im z_l}\\
\end{pmatrix}
.
$$

\begin{lemma}\label{1200}
  For all $(\bx,\bz)\in\R^k\times\C^{l},$
\begin{equation}\label{1233}
|\det J_{\be}(\bx,\bz)|
%=v(\bx,\bz)
% \prod_{i=1}^{k}\Bigg|\sum_{j=0}^{k+2l-1}j\eta_j(\bx)x_i^{j-1} + \sum_{j=k+2l}^{n}j\xi_j x_i^{j-1}\Bigg|\\
% \times\prod_{i=1}^{k}\Bigg|\sum_{j=0}^{k+2l-1}j\eta_j(\bz)z_i^{j-1} + \sum_{j=k+2l}^{n}j\xi_j z_i^{j-1}\Bigg|.
=2^l\varphi(\bx,\bz),
\end{equation}
where $\varphi(\bx,\bz)$ is defined in \eqref{eq-phi-def}.
\end{lemma}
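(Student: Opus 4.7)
The plan is to differentiate implicitly the defining relation for $\be$. By construction~\eqref{2327}, the coefficients $\eta_0(\bx,\bz),\dots,\eta_{k+2l-1}(\bx,\bz)$ are determined by the requirement that the polynomial
\[
g(z):=\sum_{j=0}^{k+2l-1}\eta_j(\bx,\bz)\,z^j+\sum_{j=k+2l}^{n}\xi_j\,z^j
\]
vanish at $x_1,\dots,x_k,z_1,\dots,z_l$ (and hence also at $\bar z_1,\dots,\bar z_l$, since its coefficients are real). Collect these $k+2l$ scalar vanishing conditions into a map
\[
F(\bx,\bz,\be):=\bigl(g(x_1),\dots,g(x_k),\,\Re g(z_1),\,\Im g(z_1),\dots,\Re g(z_l),\,\Im g(z_l)\bigr),
\]
so that $F(\bx,\bz,\be(\bx,\bz))\equiv 0$. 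The implicit function theorem then gives
\[
J_{\be}(\bx,\bz)=-\Bigl(\tfrac{\partial F}{\partial \be}\Bigr)^{-1}\,\tfrac{\partial F}{\partial(\bx,\bz)}\Big|_{\be\text{ fixed}},
\]
and hence $|\det J_{\be}|=|\det(\partial F/\partial(\bx,\bz))|/|\det(\partial F/\partial \be)|$.

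The first step is to identify $\partial F/\partial \be$. Since $\partial g(x_i)/\partial\eta_j=x_i^j$ and $\partial\Re g(z_i)/\partial\eta_j=\Re z_i^j$, $\partial \Im g(z_i)/\partial\eta_j=\Im z_i^j$, this matrix is exactly the real Vandermonde-type matrix $V(\bx,\bz)$. By identity~\eqref{1931},
\[
\bigl|\det(\partial F/\partial \be)\bigr|=|\det V(\bx,\bz)|=2^{-l}\,\rv_{k+2l}(\bx,\bz,\bar\bz).
\]
The second step is to compute the direct Jacobian. With $\be$ frozen, $g(x_i)$ depends only on $x_i$, and $g(z_i)$ depends only on $z_i$; moreover, since $g$ is holomorphic, $\partial g(z_i)/\partial\Re z_i=g'(z_i)$ and $\partial g(z_i)/\partial\Im z_i=ig'(z_i)$. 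Hence $\partial F/\partial(\bx,\bz)|_{\be\text{ fixed}}$ is block diagonal: $k$ scalar blocks equal to $g'(x_i)$ and $l$ blocks
\[
\begin{pmatrix}\Re g'(z_i) & -\Im g'(z_i)\\ \Im g'(z_i) & \phantom{-}\Re g'(z_i)\end{pmatrix},
\]
the latter being similarity matrices of determinant $|g'(z_i)|^2$. Therefore
\[
\bigl|\det(\partial F/\partial(\bx,\bz))\bigr|=\prod_{i=1}^{k}|g'(x_i)|\,\prod_{i=1}^{l}|g'(z_i)|^2.
\]

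Combining the two determinant computations yields
\[
|\det J_{\be}(\bx,\bz)|=\frac{2^l}{\rv_{k+2l}(\bx,\bz,\bar\bz)}\,\prod_{i=1}^{k}|g'(x_i)|\,\prod_{i=1}^{l}|g'(z_i)|^2.
\]
Since $g'(x_i)=\sum_{j=0}^{k+2l-1}j\eta_j(\bx,\bz)x_i^{j-1}+\sum_{j=k+2l}^{n}j\xi_j x_i^{j-1}$ and analogously for $g'(z_i)$, the right-hand side is precisely $2^l\varphi(\bx,\bz)$ by~\eqref{eq-phi-def}, as required. The only delicate point in the argument is verifying the Cauchy-Riemann style block structure above, which follows directly from the holomorphicity of $g$ and is the reason the factor $|g'(z_i)|^2$ (rather than $|g'(z_i)|$) appears; everything else is bookkeeping.
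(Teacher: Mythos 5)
Your proposal is correct and is essentially the paper's own argument: the authors differentiate the identity $V(\bx,\bz)\be(\bx,\bz)=-M(\bx,\bz)(\xi_{k+2l},\dots,\xi_n)^T$ directly, which is exactly your implicit-function-theorem computation with $F=V\be+M\xi$, yielding $V J_{\be}=-\diag(A_1+A_3,A_2+A_4)$ with the same block-diagonal right-hand side, the same use of $|\det V|=2^{-l}\rv_{k+2l}(\bx,\bz,\bar\bz)$, and the same Cauchy--Riemann observation producing the $|g'(z_i)|^2$ factors. No gaps.
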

\begin{proof}
Differentiating
\[
V(\bx,\bz)\be(\bx,\bz)=
  -
\begin{pmatrix}
x_1^{k+2l} & x_1^{k+2l+1} & \dots & x_1^{n} \\
\vdots & \vdots & \ddots & \vdots \\
x_k^{k+2l} & x_k^{k+2l+1} & \dots & x_k^{n} \\
\Re z_1^{k+2l} & \Re z_1^{k+2l+1} & \dots & \Re z_1^{n} \\
\Im z_1^{k+2l} & \Im z_1^{k+2l+1} & \dots & \Im z_1^{n} \\
\vdots & \vdots & \ddots & \vdots \\
\Re z_l^{k+2l} & \Re z_l^{k+2l+1} & \dots & \Re z_l^{n} \\
\Im z_l^{k+2l} & \Im z_l^{k+2l+1} & \dots & \Im z_l^{n} \\
\end{pmatrix}
\begin{pmatrix}
\xi_{k+2l}\\
\vdots\\
\xi_n
\end{pmatrix}
,
\]
we obtain
\begin{equation}\label{2224}
  V(\bx,\bz)J_{\be}(\bx,\bz)+
  \begin{pmatrix}
    A_1 & \mathbf{0} \\
     \mathbf{0} &A_2
  \end{pmatrix}
 =- \begin{pmatrix}
    A_3 & \mathbf{0} \\
     \mathbf{0} & A_4
  \end{pmatrix},
\end{equation}
where
$$
A_1:=
\begin{pmatrix}
\sum_{j=0}^{k+2l-1}j\eta_jx_1^{j-1} &0& \dots &0 \\
0&\sum_{j=0}^{k+2l-1}j\eta_jx_2^{j-1} & \dots &0\\
\vdots &\vdots & \ddots & \vdots \\
0&0& \dots & \sum_{j=0}^{k+2l-1}j\eta_jx_k^{j-1} \\
\end{pmatrix},
$$
$$
A_2:=
\begin{pmatrix}
\sum_{j=0}^{k+2l-1}\eta_j\frac{\partial\Re z_1^j}{\partial\Re z_1}  & \sum_{j=0}^{k+2l-1}\eta_j\frac{\partial\Re z_1^j}{\partial\Im z_1}  &\dots & 0&0\\
\sum_{j=0}^{k+2l-1}\eta_j\frac{\partial\Im z_1^j}{\partial\Re z_1}  & \sum_{j=0}^{k+2l-1}\eta_j\frac{\partial\Im z_1^j}{\partial\Im z_1}  &\dots & 0&0\\
 \vdots &  \vdots&\ddots & \vdots&\vdots\\
 0&0 &\dots&\sum_{j=0}^{k+2l-1}\eta_j\frac{\partial\Re z_l^j}{\partial\Re z_l}  & \sum_{j=0}^{k+2l-1}\eta_j\frac{\partial\Re z_l^j}{\partial\Im z_l} \\
  0&0 &\dots&\sum_{j=0}^{k+2l-1}\eta_j\frac{\partial\Im z_l^j}{\partial\Re z_l}  & \sum_{j=0}^{k+2l-1}\eta_j\frac{\partial\Im z_l^j}{\partial\Im z_l} \\
\end{pmatrix},
$$
$$
A_3:=
\begin{pmatrix}
\sum_{j=k+2l}^{n}j\xi_jx_1^{j-1} &0& \dots &0 \\
0&\sum_{j=k+2l}^{n}j\xi_jx_2^{j-1} & \dots &0\\
\vdots &\vdots & \ddots & \vdots \\
0&0& \dots & \sum_{j=k+2l}^{n}j\xi_jx_k^{j-1} \\
\end{pmatrix},
$$
$$
A_4:=
\begin{pmatrix}
\sum_{j=k+2l}^{n}\xi_j\frac{\partial\Re z_1^j}{\partial\Re z_1}  & \sum_{j=k+2l}^{n}\xi_j\frac{\partial\Re z_1^j}{\partial\Im z_1}  &\dots & 0&0\\
\sum_{j=k+2l}^{n}\xi_j\frac{\partial\Im z_1^j}{\partial\Re z_1}  & \sum_{j=k+2l}^{n}\xi_j\frac{\partial\Im z_1^j}{\partial\Im z_1}  &\dots & 0&0\\
 \vdots &  \vdots&\ddots & \vdots&\vdots\\
 0&0 &\dots&\sum_{j=k+2l}^{n}\xi_j\frac{\partial\Re z_l^j}{\partial\Re z_l}  & \sum_{j=k+2l}^{n}\xi_j\frac{\partial\Re z_l^j}{\partial\Im z_l} \\
  0&0 &\dots&\sum_{j=k+2l}^{n}\xi_j\frac{\partial\Im z_l^j}{\partial\Re z_l}  & \sum_{j=k+2l}^{n}\xi_j\frac{\partial\Im z_l^j}{\partial\Im z_l} \\
\end{pmatrix}.
$$

We finish the proof  by moving in \eqref{2224} the second term from the left-hand side to the right-hand side, using~\eqref{1931}, and noting that for any analytic function $f(z)$
$$
 \det\begin{pmatrix}
   \frac{\partial\Re f}{\partial\Re z} &  \frac{\partial\Re f}{\partial\Im z} \\
      \frac{\partial\Im f}{\partial\Re z} &  \frac{\partial\Im f}{\partial\Im z}
  \end{pmatrix}
  =|f'(z)|^2.
$$
%$$
%\det V(\bx)=\prod_{1\leq i<j\leq k}(x_j-x_i).
%$$
\end{proof}

\begin{lemma}[Coarea formula]\label{lm-coarea}
Let $B\subset \R^m$ be a region. Let $\bu:B\to\R^m$ be a Lipschitz function and $h:\R^m\to\R$ be an $L^1$-function. Then
\begin{equation}\label{1225}
\int\limits_{\R^m}\#\{\bx\in B:\bu(\bx)=\by\}\,h(\by)\dd\by=\int\limits_{B}|\det J_{\bu}(\bx)|\, h(\bu(\bx))\dd\bx,
\end{equation}
where $J_{\bu}(\bx)$ is the Jacobian matrix of $\bu(\bx)$, and $\#S$ denotes the cardinality of a set $S$.
\end{lemma}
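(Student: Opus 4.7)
The statement is the classical area formula (the equal-dimensions special case of Federer's coarea formula), so my plan is to assemble the standard proof and, in the actual paper, point the reader to a textbook reference such as Federer's \emph{Geometric Measure Theory} or Evans--Gariepy's \emph{Measure Theory and Fine Properties of Functions}. The core idea is to decompose $B$ according to the rank of $J_{\bu}$ and then apply the ordinary change-of-variables formula on pieces where $\bu$ is bi-Lipschitz and injective.

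First, I would invoke Rademacher's theorem: since $\bu$ is Lipschitz, it is differentiable almost everywhere on $B$, so $J_{\bu}(\bx)$ is defined a.e., measurable, and bounded in norm by the Lipschitz constant. In particular $|\det J_{\bu}|$ is an $L^\infty$ function, so the right-hand integrand is integrable whenever $h\in L^1$.

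Next I would split $B$ into the critical set $C:=\{\bx\in B:\det J_{\bu}(\bx)=0\}$ and its complement. The integral over $C$ on the right-hand side vanishes identically, and a Lipschitz Sard lemma implies $\Vol_m(\bu(C))=0$, so the left-hand side is unaffected as well. On $B\setminus C$ I would use the Whitney-type decomposition argument: up to a null set, $B\setminus C$ can be written as a countable disjoint union of Borel sets $E_1,E_2,\dots$ on each of which $\bu$ is bi-Lipschitz and injective (obtained by covering the approximate differential set $\bx\mapsto J_{\bu}(\bx)$ by countably many matrices within a fixed tolerance and intersecting with a dyadic grid). On each such piece the classical change-of-variables formula gives
\[
\int_{E_k} |\det J_{\bu}(\bx)|\,h(\bu(\bx))\dd\bx=\int_{\bu(E_k)} h(\by)\dd\by.
\]

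Finally I would sum over $k$: the disjointness of the $E_k$ means that for each $\by\in\R^m$ the number of indices $k$ with $\by\in\bu(E_k)$ equals the preimage count $\#\{\bx\in B:\bu(\bx)=\by\}$ (up to a null set in $\by$), so summing the identities above and applying Tonelli yields exactly \eqref{1225}. The only delicate point is the bi-Lipschitz decomposition of $B\setminus C$; this is the standard technical step in proving the area formula and is carried out via Rademacher plus a covering argument using the invertibility of the approximate differential, so for a paper of this type a clean reference in place of a detailed proof is both customary and sufficient.
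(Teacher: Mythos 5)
Your proposal is correct and matches the paper's treatment: the paper gives no proof of its own but simply cites Federer \cite[pp.~243--244]{hF1969}, exactly the textbook reference (and standard area-formula argument via Rademacher, a Lipschitz Sard lemma, and a bi-Lipschitz decomposition) that you outline. Nothing further is needed.
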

\begin{proof}
See~\cite[pp. 243--244]{hF1969}.
\end{proof}

\subsection{Proof \tc{of Theorem~\ref{thm-corr-rel}}}

Now we are ready to finish the proof of Theorem~\ref{thm-corr-rel}. To this end, we  show that  for any family of mutually disjoint Borel subsets $B_1,\dots,B_k\subset\R$ and $B_{k+1},\dots,B_{k+l}\subset\C_+$,
\begin{equation}\label{1647}
\E\left[\prod_{i=1}^{k+l}\mu(B_i)\right]=2^l\int_{B_1}\dots\int_{B_{k+l}}\,\rho_{k+2l}(\bx,\bz)\dd x_1\dots \dd x_k \dd z_1\dots \dd z_l.
\end{equation}
{From~\eqref{1404} we get
\begin{equation}\label{eq-anoth-way}
\E\,\left[\prod_{i=1}^{k+l}\mu(B_i)\right]
=\E\left[\#\{(\bx,\bz)\in B_1\times\dots\times B_{k+l}:\eta(\bx,\bz)=(\xi_0,\dots,\xi_{k+2l-1})\}\right].
\end{equation}}
For clarity, denote the sets $B_{k+j}$ by $\tilde B_{k+j}$ when we consider them as subsets of $\R^2$:
$$
\tilde B_{k+j}:=\{(x,y)\in\R^2:x+\tc{\mathbf{i}}y\in B_{k+j}\}.
$$
Let us apply Lemma~\ref{lm-coarea} {to \eqref{eq-anoth-way}} with
$$
m=k+2l,\quad B=B_1\times\dots\times B_k\times \tilde B_{k+1}\times\dots\times \tilde B_{k+l},
$$
$$
\bu(x_1,\dots,x_{k+2l})=\be(x_1,\dots,x_k,x_{k+1}\pm ix_{k+2},\dots,x_{k+2l-1}\pm ix_{k+2l}),
$$
$$
{h(y_0,\dots,y_{k+2l-1})=f_0(y_0)\dots f_{k+2l-1}(y_{k+2l-1}).}
$$
Note that the indices of $y_i$'s are shifted by 1 according to the {enumeration of
 polynomial coefficients.
Hence due to Lemma~\ref{lm-coarea}, the right-hand side of \eqref{eq-anoth-way} is equal to}
\begin{multline*}
\int\limits_{\R^{k+2l}}\E\left[\#\{(x_1,\dots,x_{k+2l})\in B\,:\, \bu(x_1,\dots,x_{k+2l})=\by\}\right]f_0(y_0)\dots f_{k+2l-1}(y_{k+2l-1})\,\dd\by\\
=\E\int_B|\det J_{\bu}(\bx)|\,\prod_{i=0}^{k+2l-1}f_i(u_i(x_1,\dots,x_{k+2l}))\,\dd x_1\dots \dd x_{k+2l},
\end{multline*}
where  we used {first Fubini's theorem and} then~\eqref{1225}. The Jacobian matrix of $\bu$ coincides with the real Jacobian matrix of $\be$, the determinant of which is given by Lemma~\ref{1200}. Thus,
switching from $\bu$ to $\be$ and again using Fubini's theorem we obtain
\begin{align*}
\E\left[\prod_{i=1}^{k+l}\mu(B_i)\right]&=2^l\int\limits_{B}\E\,\left[\varphi(\bx,\bz)\prod_{i=0}^{k+2l-1}f_i(\eta_i(\bx,\bz))\right]\dd\bx\,\dd\bz,
\end{align*}
{where $\varphi(\bx,\bz)$ is defined in \eqref{eq-phi-def}.}

Combining this with~\eqref{1835} implies~\eqref{1647}, and {due to \eqref{eq-rhokl}} the theorem follows.

\section{Proof of \tc{Theorem}~\ref{1600}}\label{sc-proof-add-ident}

We first consider the case $p<\infty$. Applying~\eqref{1638}    to $G_{p,\bw}$ defined in~\eqref{1956} gives
\begin{align*}
\rho_{p,\bw, n-2l,l}(\bx,\bz)&=\frac{2^{l-n-1} w_0\dots w_n }{\left(\Gamma\left(1+\frac1p\right)\right)^{n+1}}\rv_{n}(\bx,\bz,\bar\bz)\\
&\times\int\limits_{\R }|t|^n \exp\left(-|t|^p\sum_{i=0}^{n}|w_i \sigma_{n-i}(\bx,\bz,\bar\bz)|^p\right)\dd t.
\end{align*}
Using the substitution
$$
s=|t|^p\sum_{i=0}^{n}|w_i \sigma_{n-i}(\bx,\bz,\bar\bz)|^p
$$
and a representation  of the gamma function, we obtain
$$
\rho_{p,\bw, n-2l,l}(\bx,\bz)=\frac{2^{l-n} w_0\dots w_n \Gamma\left(\frac{n+1}{p}\right)}{p\left(\Gamma\left(1+\frac1p\right)\right)^{n+1}}\rv_{n}(\bx,\bz,\bar\bz)
\left(\sum_{i=0}^{n}|w_i \sigma_{n-i}(\bx,\bz,\bar\bz)|^p\right)^{-\frac{n+1}{p}}.
$$
Using the identity
$\frac{1}{p}\, \Gamma\!\left(\frac{n+1}{p}\right) = \frac{1}{n+1}\, \Gamma\!\left(\frac{n+1}{p}+1\right)$
concludes the proof of  Theorem~\ref{1600} for $p<\infty$.

\tc{The case $p=\infty$ follows from the case $p<\infty$ by letting $p\to\infty$:
\begin{align*}
\rho_{\infty,\bw, n-2l,l}(\bx,\bz)&=\lim_{p\to\infty}\rho_{p,\bw, n-2l,l}(\bx,\bz)\\
&=\frac{2^{l-n} w_0\dots w_n \rv_{n}(\bx,\bz,\bar\bz) }{(n+1)(\max_{0\le i\le n}|w_i\sigma_{n-i}(\bx,\bz,\bar\bz)|)^{n+1}},
\end{align*}
where in the first equality we used the continuity of the $\Gamma$-function \tb{at  1}  and in the second -- the limit equality
\[
\lim_{p\to\infty} \left(\sum_{j=1}^m |b_j|^p\right)^{1/p}=\max_{1\le j\le m} |b_j|.
\]
The theorem follows.}

\section*{Acknowledgments}

The authors are grateful to Zakhar Kabluchko and Manjunath Krishnapur for informing the authors  about a number of relevant papers in this research area. \tc{The authors also wish to express their thanks to the unknown referee for many suggestions which improved the paper and, in particular,  simplified the original proof of Theorem~\ref{1600}.}
%The authors would like to thank the referees from IMRN for their careful reading the paper and valuable remarks.

\bibliographystyle{abbrv}
\bibliography{corrf5}

\end{document}